\newtheorem{obs} [subsection]{Remark}
\newtheorem{exm} [subsection]{Example}
\newtheorem{prop}[subsection]{Proposition}
\newtheorem{conj}[subsection]{Conjecture}
\newtheorem{teor}[subsection]{Theorem}
\newtheorem{lema}[subsection]{Lemma}
\newtheorem{cor} [subsection]{Corollary}
\newcommand{\Zng}{$\mathbb Z^n$-graded $S$-module}
\def\sdepth{\operatorname{sdepth}}
\def\depth{\operatorname{depth}}
\begin{document}
\selectlanguage{english}
\frenchspacing

\large
\begin{center}
\textbf{On the Stanley depth of edge ideals of line and cyclic graphs}

Mircea Cimpoea\c s
\end{center}
\normalsize

\begin{abstract}
We prove that the edge ideals of line and cyclic graphs and their quotient rings satisfy the Stanley conjecture. We compute the Stanley depth for the quotient ring of the edge ideal associated to a cycle graph of length $n$, given a precise formula for $n\equiv 0,2 \pmod{3}$ and tight bounds for $n\equiv 1 \pmod{3}$. Also, we give bounds for the Stanley depth of a quotient of two monomial ideals, in combinatorial terms.

\noindent \textbf{Keywords:} Stanley depth, Stanley conjecture, monomial ideal, edge ideal.

\noindent \textbf{2010 Mathematics Subject
Classification:}Primary: 13C15, Secondary: 13P10, 13F20.
\end{abstract}

\section*{Introduction}

Let $K$ be a field and $S=K[x_1,\ldots,x_n]$ the polynomial ring over $K$.
Let $M$ be a \Zng. A \emph{Stanley decomposition} of $M$ is a direct sum $\mathcal D: M = \bigoplus_{i=1}^rm_i K[Z_i]$ as a $\mathbb Z^n$-graded $K$-vector space, where $m_i\in M$ is homogeneous with respect to $\mathbb Z^n$-grading, $Z_i\subset\{x_1,\ldots,x_n\}$ such that $m_i K[Z_i] = \{um_i:\; u\in K[Z_i] \}\subset M$ is a free $K[Z_i]$-submodule of $M$. We define $\sdepth(\mathcal D)=\min_{i=1,\ldots,r} |Z_i|$ and $\sdepth_S(M)=\max\{\sdepth(\mathcal D)|\;\mathcal D$ is a Stanley decomposition of $M\}$. The number $\sdepth_S(M)$ is called the \emph{Stanley depth} of $M$. In \cite{apel}, J.\ Apel restated a conjecture firstly given by Stanley in \cite{stan}, namely that $\sdepth_S(M)\geq\depth_S(M)$ for any \Zng $\;M$. This conjecture proves to be false, in general, for $M=S/I$ and $M=J/I$, where $I\subset J\subset S$ are monomial ideals, see \cite{duval}.

Herzog, Vladoiu and Zheng show in \cite{hvz} that $\sdepth_S(M)$ can be computed in a finite number of steps if $M=I/J$, where $J\subset I\subset S$ are monomial ideals.  However, it is difficult to compute this invariant, even in some very particular cases. In \cite{rin}, Rinaldo give a computer implementation for this algorithm, in the computer algebra system $CoCoA$ \cite{cocoa}. However, it is difficult to compute this invariant, even in some very particular cases.  For instance in \cite{par} Biro et al. proved that $\sdepth(m)= \left\lceil n/2 \right\rceil$ where $m=(x_1,\ldots,x_n)$.


Let $I_n$ and $J_n$ be the edges ideals associated to the $n$-line, respectively $n$-cycle, graph. Firstly, we prove that $\depth(S/J_n) = \left\lceil \frac{n-1}{3} \right\rceil$, see Proposition $1.3$. Alin \c Stefan \cite{alin} proved that $\sdepth(S/I_n) = \left\lceil \frac{n}{3} \right\rceil$. Using similar techniques, we prove that $\sdepth(S/J_n) =  \left\lceil \frac{n-1}{3} \right\rceil$, for $n\equiv 0 \pmod{3}$ and $n\equiv 2 \pmod{3}$. Also, we prove that $\left\lceil \frac{n-1}{3} \right\rceil \leq \sdepth(S/J_n) \leq \left\lceil \frac{n}{3} \right\rceil$, for $n\equiv 1 \pmod{3}$. See Theorem $1.9$. In particular, $S/J_n$ satisfies the Stanley conjecture. Also, we note that both $I_n$ and $J_n$ satisfy the Stanley conjecture, see Corollary $1.5$. In Proposition $1.10$, we prove that $\sdepth(J_n/I_n) = \depth(J_n/I_n) = \left\lceil \frac{n+2}{3} \right\rceil$. In the second section, we give an upper bound for the Stanley depth of a quotient of two square free monomial ideals, in combinatorial terms, see Theorem $2.4$. Also, we give a lower bound for the Stanley depth of a quotient of two arbitrary monomial ideals, see Proposition $2.9$.


\footnotetext[1]{We greatfully acknowledge the use of the computer algebra system CoCoA (\cite{cocoa}) for our experiments.}
\footnotetext[2]{The support from grant ID-PCE-2011-1023 of Romanian Ministry of Education, Research and Innovation is gratefully acknowledged.}
\newpage
\section{Main results}

Let $n\geq 3$ be an integer and let $G=(V,E)$ be a graph with the vertex set $V=[n]$ and edge set $E$. Then the \emph{edge ideal $I(G)$} associated to $G$ is the squarefree monomial ideal $I=(x_ix_j:\;\{i,j\}\in E)$ of $S$.

We consider the l\emph{ine graph $L_n$} on the vertex set $[n]$ and with the edge set $E(L_n)=\{(i,i+1):\;i\in[n-1]\}$. Then $I_n=I(L_n)=(x_1x_2,\ldots,x_{n-1}x_n)\subset S$. Also, we consider the cyclic graph $C_n$ on the vertex set $[n]$ and with the edge set $E(C_n)=\{(i,i+1):\;i\in[n-1]\}\cup\{(n,1)\}$. Then $J_n=I_n+(x_nx_1) \subset S$.

We recall the well known Depth Lemma, see for instance \cite[Lemma 1.3.9]{real} or \cite[Lemma 3.1.4]{vasc}.

\begin{lema}(Depth Lemma)
If $0 \rightarrow U \rightarrow M \rightarrow N \rightarrow 0$ is a short exact sequence of modules over a local ring $S$, or a Noetherian graded ring with $S_0$ local, then

a) $\depth M \geq \min\{\depth N,\depth U\}$.

b) $\depth U \geq \min\{\depth M,\depth N +1 \}$.

c) $\depth N\geq \min\{\depth U - 1,\depth M\}$.
\end{lema}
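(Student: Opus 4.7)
The plan is to deduce all three inequalities from the long exact sequence in local cohomology (or equivalently Ext) associated to the given short exact sequence, using the cohomological characterization of depth: for a finitely generated module $M$ over the local ring $(S,\mathfrak{m})$ (or graded ring with $S_0$ local and irrelevant maximal ideal $\mathfrak{m}$), one has
\[
\depth M = \min\bigl\{i \;:\; H^{i}_{\mathfrak{m}}(M)\neq 0\bigr\} = \min\bigl\{i \;:\; \operatorname{Ext}^{i}_{S}(k,M)\neq 0\bigr\},
\]
where $k=S/\mathfrak{m}$. Both characterizations work in the local and graded settings, and I would quote this as the main input.

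Applying the contravariant functors $H^{\bullet}_{\mathfrak{m}}(-)$ (or $\operatorname{Ext}^{\bullet}_{S}(k,-)$) to $0\to U\to M\to N\to 0$ produces the long exact sequence
\[
\cdots \to H^{i-1}_{\mathfrak{m}}(N) \to H^{i}_{\mathfrak{m}}(U) \to H^{i}_{\mathfrak{m}}(M) \to H^{i}_{\mathfrak{m}}(N) \to H^{i+1}_{\mathfrak{m}}(U) \to \cdots
\]
Each statement is then a direct vanishing argument:
\begin{itemize}
\item For (a), set $d=\min\{\depth U,\depth N\}$; for $i<d$ both $H^{i}_{\mathfrak{m}}(U)$ and $H^{i}_{\mathfrak{m}}(N)$ vanish, forcing $H^{i}_{\mathfrak{m}}(M)=0$.
\item For (b), set $d=\min\{\depth M,\depth N+1\}$; for $i<d$ we get $H^{i}_{\mathfrak{m}}(M)=0$ and $H^{i-1}_{\mathfrak{m}}(N)=0$, and the piece $H^{i-1}_{\mathfrak{m}}(N)\to H^{i}_{\mathfrak{m}}(U)\to H^{i}_{\mathfrak{m}}(M)$ kills $H^{i}_{\mathfrak{m}}(U)$.
\item For (c), set $d=\min\{\depth U-1,\depth M\}$; for $i<d$ we have $H^{i}_{\mathfrak{m}}(M)=0$ and $H^{i+1}_{\mathfrak{m}}(U)=0$, and the piece $H^{i}_{\mathfrak{m}}(M)\to H^{i}_{\mathfrak{m}}(N)\to H^{i+1}_{\mathfrak{m}}(U)$ kills $H^{i}_{\mathfrak{m}}(N)$.
\end{itemize}

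There is essentially no obstacle: once the cohomological characterization of depth is taken as known, each inequality is a one-line consequence of exactness. If one prefers an elementary approach avoiding local cohomology, the alternative is induction on $\min\{\depth U,\depth M,\depth N\}$ using a regular element $x\in\mathfrak{m}$ (chosen in the graded case to be homogeneous and outside all relevant associated primes), passing to $0\to U/xU\to M/xM\to N/xN\to 0$ and the snake lemma; the only subtle point there is the existence of a single $x$ that is regular on whichever of the three modules currently has positive depth, which in the local/graded-local setting follows from prime avoidance. Either route gives the three inequalities simultaneously.
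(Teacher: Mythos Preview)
Your argument is correct and is the standard proof via the long exact sequence in local cohomology (or $\operatorname{Ext}^{\bullet}_{S}(k,-)$). Note, however, that the paper does not actually prove this lemma: it is merely recalled as the well-known Depth Lemma with references to \cite[Lemma 1.3.9]{real} and \cite[Lemma 3.1.4]{vasc}, so there is no in-paper proof to compare against.
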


Using Depth Lemma, Morey proved in \cite{mor} the following result.

\begin{lema}\cite[Lemma 2.8]{mor}
$\depth(S/I_n)=\left\lceil \frac{n}{3} \right\rceil$.
\end{lema}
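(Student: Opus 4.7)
The plan is to prove $\depth(S/I_n) = \lceil n/3 \rceil$ by induction on $n$, treating the two inequalities separately. The base cases $n \leq 3$ are handled by direct computation: for instance $S/I_3 = S/(x_1x_2, x_2x_3)$ has associated primes $(x_2)$ and $(x_1,x_3)$, forcing $\depth(S/I_3) = 1 = \lceil 3/3 \rceil$.

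For the inductive lower bound I would apply the Depth Lemma part (a) to the short exact sequence
$$0 \longrightarrow S/(I_n : x_{n-1}) \xrightarrow{\,\cdot\, x_{n-1}\,} S/I_n \longrightarrow S/(I_n, x_{n-1}) \longrightarrow 0.$$
A direct calculation gives $(I_n : x_{n-1}) = (I_{n-3}, x_{n-2}, x_n)$ and $(I_n, x_{n-1}) = (I_{n-2}, x_{n-1})$, so the outer modules are polynomial rings in one free variable over $S_{n-3}/I_{n-3}$ and $S_{n-2}/I_{n-2}$ respectively. Their depths, by induction, are $\lceil (n-3)/3 \rceil + 1$ and $\lceil (n-2)/3 \rceil + 1$. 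A brief check of the three residue classes $n \bmod 3$ shows that both of these values are at least $\lceil n/3 \rceil$, so the Depth Lemma gives $\depth(S/I_n) \geq \lceil n/3 \rceil$.

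For the upper bound I would exhibit a minimal vertex cover $F$ of $L_n$ of cardinality $n - \lceil n/3 \rceil$. When $n = 3k$ one may take $F = \{1,3,4,6,\ldots,3k-2,3k\}$, i.e.\ all vertices not congruent to $2 \pmod 3$; minor modifications at the right-hand endpoint handle the other two residue classes. Minimality is immediate because each $v \in F$ uniquely covers some edge (the one to its neighbour outside $F$), so $P_F = (x_i : i \in F)$ is a minimal, hence associated, prime of $I_n$, and
$$\depth_S(S/I_n) \leq \dim(S/P_F) = n - |F| = \lceil n/3 \rceil.$$

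The main subtlety is choosing the correct colon in the inductive step: the naive colon by $x_n$ processes only two vertices of the path at once and loses a unit in the Depth Lemma estimate when $n \equiv 1 \pmod 3$. The colon by $x_{n-1}$ succeeds because $x_{n-1}$ appears in two distinct generators of $I_n$, so $(I_n : x_{n-1})$ kills both $x_{n-2}$ and $x_n$ at once and balances the depths of the two outer terms. The remainder of the argument is careful bookkeeping of which variables appear free versus constrained in each quotient, plus a routine combinatorial verification of minimality for the explicit vertex covers used in the upper bound.
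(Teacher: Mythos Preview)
The paper does not give its own proof of this lemma; it simply cites Morey \cite{mor} and remarks that the Depth Lemma is used. Your argument is correct and follows exactly that template: the inductive lower bound via the short exact sequence on the colon by $x_{n-1}$ is valid (your computations of $(I_n:x_{n-1})$ and $(I_n,x_{n-1})$ are accurate, and the depth counts match in every residue class), and the upper bound via an explicit maximal-size minimal vertex cover is a standard and correct way to bound $\depth$ above for an edge ideal.

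One small point worth noting by way of comparison: the paper's own proof of the analogous Proposition~1.3 for the cycle uses the colon by $x_n$ and is then forced into a separate, more delicate argument when $n\equiv 1\pmod 3$, precisely the phenomenon you flag in your final paragraph. Your choice of $x_{n-1}$ for the path sidesteps this entirely, since both outer terms then carry a free variable and the minimum lands on $\lceil n/3\rceil$ in all three residue classes. So your approach is, if anything, slightly cleaner than a direct transcription of the paper's method for $J_n$ would be.
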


In the following, we will prove a similar result for $S/J_n$.

\begin{prop}
$\depth(S/J_n)=\left\lceil \frac{n-1}{3} \right\rceil$.
\end{prop}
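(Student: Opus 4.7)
The proof will go by induction on $n$, with the small cases ($n \leq 5$) handled by direct computation, for instance via Reisner's criterion applied to the independence complex of $C_n$. For the inductive step, apply the Depth Lemma to the short exact sequence
$$0 \longrightarrow S/(J_n : x_n) \xrightarrow{\cdot x_n} S/J_n \longrightarrow S/(J_n, x_n) \longrightarrow 0.$$
A direct computation identifies the outer terms: setting $x_n = 0$ erases the two cycle edges incident to $x_n$, so $(J_n, x_n) = (x_n) + I_{n-1}$, giving $S/(J_n, x_n) \cong K[x_1,\ldots,x_{n-1}]/I_{n-1}$ with depth $\lceil (n-1)/3 \rceil$ by Lemma 1.2. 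The colon $(J_n : x_n)$ contains $x_1$ and $x_{n-1}$ (forced by the generators $x_1 x_n$ and $x_{n-1} x_n$), and the remaining edges form the line on $x_2,\ldots,x_{n-2}$, so $S/(J_n : x_n) \cong K[x_2,\ldots,x_{n-2},x_n]/I_{n-3}$ has depth $\lceil (n-3)/3 \rceil + 1$ (the $+1$ coming from the free variable $x_n$).

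Part (a) of the Depth Lemma yields the lower bound $\depth(S/J_n) \geq \lceil (n-1)/3 \rceil$ in every case, by a quick check of the minimum on $n \bmod 3$. For the residues $n \equiv 0, 2 \pmod 3$, the upper bound follows from part (b): the inequality
$$\lceil (n-3)/3 \rceil + 1 \geq \min\{\depth(S/J_n),\; \lceil (n-1)/3 \rceil + 1\}$$
has left-hand side equal to $\lceil (n-1)/3 \rceil$ in these residues, which is strictly less than $\lceil (n-1)/3 \rceil + 1$; this forces the minimum on the right to equal $\depth(S/J_n)$, yielding $\depth(S/J_n) \leq \lceil (n-1)/3 \rceil$.

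The main obstacle is the residue $n \equiv 1 \pmod 3$: here $\depth(S/(J_n : x_n)) = \lceil (n-1)/3 \rceil + 1 = \depth(S/(J_n, x_n)) + 1$, so parts (b) and (c) of the Depth Lemma become vacuous and give no information beyond the lower bound. I would handle this residue by a secondary reduction---either by iterating the Depth Lemma on a finer short exact sequence obtained by further splitting the colon term, or by first cutting $S/J_n$ at a different vertex so as to produce a quotient in which a smaller cycle $J_{n-3}$ appears, and then invoking the inductive hypothesis $\depth(S/J_{n-3}) = \lceil (n-1)/3 \rceil - 1$ to force $\depth(S/J_n) \leq \lceil (n-1)/3 \rceil$. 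Making this secondary reduction precise so that the Depth Lemma cleanly delivers the upper bound in the residue $n \equiv 1 \pmod 3$ is the technical heart of the argument.
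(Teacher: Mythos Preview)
Your setup and the treatment of the residues $n\equiv 0,2\pmod 3$ match the paper essentially line for line: same short exact sequence, same identification of $(J_n:x_n)$ and $(J_n,x_n)$, and the same use of parts (a) and (b) of the Depth Lemma once the two outer depths coincide.

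The genuine gap is the residue $n\equiv 1\pmod 3$. You correctly diagnose that the first exact sequence gives only the lower bound there, but you do not actually carry out a secondary reduction; you only gesture at two possibilities. The second of these---cutting at a different vertex to produce a smaller cycle $J_{n-3}$---cannot work as stated: by the cyclic symmetry of $C_n$, coloning out or killing any single vertex always breaks the cycle into a path, never a shorter cycle, so no choice of vertex yields $J_{n-3}$ and the inductive hypothesis on cycles is never invoked. The paper's proof does not use induction on smaller cycles at all; every reduction lands on path ideals $I_m$ and appeals to Lemma~1.2.

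What the paper actually does for $n\equiv 1\pmod 3$ is switch to the \emph{other} natural exact sequence
\[
0 \longrightarrow (J_n:x_n)/J_n \longrightarrow S/J_n \longrightarrow S/(J_n:x_n) \longrightarrow 0,
\]
and compute the left-hand term explicitly: any monomial in $(J_n:x_n)\setminus J_n$ is divisible by $x_{n-1}$ or by $x_1$ (but then not by the other's neighbour), which yields an $S$-module isomorphism
\[
\frac{(J_n:x_n)}{J_n}\;\cong\; x_{n-1}\Bigl(\tfrac{K[x_1,\ldots,x_{n-3}]}{I_{n-3}}\Bigr)[x_{n-1}]\ \oplus\ x_1\Bigl(\tfrac{K[x_3,\ldots,x_{n-2}]}{I_{n-4}}\Bigr)[x_1],
\]
with depth $\lceil(n-4)/3\rceil+1=\lceil(n-1)/3\rceil$. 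Now the left term has depth $\lceil(n-1)/3\rceil$ and the right term has depth $\lceil n/3\rceil=\lceil(n-1)/3\rceil+1$, so part (b) of the Depth Lemma forces $\depth(S/J_n)\le\lceil(n-1)/3\rceil$. This explicit computation of $(J_n:x_n)/J_n$ is the missing ``technical heart'' you allude to; without it the argument for $n\equiv 1\pmod 3$ is not complete.
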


\begin{proof}
We denote $S_k:=K[x_1,\ldots,x_k]$, the ring of polynomials in $k$ variables. We use induction on $n$. If $n\leq 3$ then is an easy exercise to prove the formula. Assume $n\geq 4$ and consider the short exact sequence
\[ 0 \longrightarrow S/(J_n:x_n) \stackrel{\cdot x_n}{\longrightarrow} S/J_n \longrightarrow S/(J_n,x_n) \longrightarrow 0. \]
Note that $(J_n:x_n) = (x_1,x_{n-1}, x_2x_3,\ldots,x_{n-3}x_{n-2})$ and therefore we get $S/(J_n:x_n)\cong \linebreak K[x_2,\ldots,x_{n-2},x_n]/(x_2x_3,\ldots,x_{n-3}x_{n-2})\cong (S_{n-3}/I_{n-3})[x_n]$. 

Also, $(J_n, x_n) = (x_1x_2,\ldots,x_{n-2}x_{n-1},x_n)$ and therefore $S/(J_n,x_n) \cong S_{n-1}/I_{n-1}$. By Lemma $1.2$, we get
$\depth(S/(J_n:x_n)) = \left\lceil \frac{n-3}{3} \right\rceil + 1 = \left\lceil \frac{n}{3} \right\rceil$ and $\depth(S/(J_n,x_n)) =\left\lceil \frac{n-1}{3} \right\rceil$. If $n\equiv 0 \pmod{3}$ or $n\equiv 2 \pmod{3}$, then 
$\left\lceil \frac{n-1}{3} \right\rceil = \left\lceil \frac{n}{3} \right\rceil$, and, by using Lemma $1.1$, we get $\depth(S/J_n)=\left\lceil \frac{n-1}{3} \right\rceil$, as required.

Assume $n\equiv 1 \pmod{3}$. We claim that we have the $S$-module isomorphism
\[\frac{(J_n:x_n)}{J_n} \cong x_{n-1}\left(\frac{K[x_1,\ldots,x_{n-3}]}{(x_1x_2,\ldots,x_{n-4}x_{n-3})} \right)[x_{n-1}] \oplus x_{1} \left( \frac{K[x_3,\ldots,x_{n-2}]}{(x_3x_4,\ldots,x_{n-3}x_{n-2})} \right) [x_{1}]. \]
Indeed, if $u\in (J_n:x_n)$ is a monomial such that $u\notin J_n$, then $x_1|u$ or $x_{n-1}|u$. If $x_{n-1}|u$, then $u=x_{n-1}v$ with $v\in S$. 

Since $u\notin J_n$, it follows that $v=x_{n-1}^{\alpha}w$, with $\alpha\geq 1$, $w\in K[x_1,\ldots,x_{n-3}]$ and $w\notin (x_1x_2,\ldots,x_{n-4}x_{n-3})$. Similarly, if $x_{n-1}\nmid u$, then $x_{1}|u$ and $u=x_1^{\alpha}w$ with $\alpha\geq 1$, $w\in K[x_3,\ldots,x_{n-2}]$ and $w\notin (x_3x_4,\ldots,x_{n-3}x_{n-2})$.

Using the above isomorphism and Lemma $1.2$, it follows that $$\depth \left( \frac{(J_n:x_n)}{J_n} \right)= \depth\left( \frac{K[x_3,\ldots,x_{n-2}]}{(x_3x_4,\ldots,x_{n-3}x_{n-2})} \right) + 1 = \left\lceil \frac{n-4}{3} \right\rceil + 1 = \left\lceil \frac{n-1}{3} \right\rceil.$$ Now, using Lemma $1.1$ for the short exact sequence $0 \rightarrow \frac{(J_n:x_n)}{J_n} \rightarrow S/J_n \rightarrow S/(J_n:x_n) \rightarrow 0$, we are done.
\end{proof}

Note that the previous Proposition can be seen as a consequence of \cite[Proposition 5.0.6]{bou}. However, we preferred to give a direct proof in order to relate it with the Stanley depth case. Now, we recall the following result of Okazaki.

\begin{teor}\cite[Theorem 2.1]{okazaki}
Let $I\subset S$ be a monomial ideal (minimally) generated by $m$ monomials. Then: \[ \sdepth(I) \geq \max\{1,n-\left\lfloor \frac{m}{2} \right\rfloor\}.\]
\end{teor}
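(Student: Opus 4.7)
My plan is to construct an explicit Stanley decomposition of $I$ that realizes the stated lower bound, working in the combinatorial framework of Herzog--Vladoiu--Zheng: the task reduces to partitioning the characteristic poset of $I$ into intervals $[a,b]$ in which every upper endpoint $b$ has support of size at least $n-\lfloor m/2\rfloor$. I would proceed by induction on the number $m$ of minimal generators.

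For the base cases, if $m=1$ then $I=u_1 S$ is a free $S$-module, so $\sdepth(I)=n$. If $m=2$ and $w=\lcm(u_1,u_2)$, the multigraded decomposition $I = u_1 S \oplus (u_2 S\setminus wS)$ gives a first piece of $\sdepth=n$ and a second piece that, by singling out a variable $x_j$ appearing in $w$ to strictly higher power than in $u_2$, splits into Stanley pieces $u_2 x_j^k K[\{x_\ell:\ell\ne j\}]$ of $\sdepth=n-1$; this matches $n-\lfloor 2/2\rfloor$.

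For the inductive step with $m\ge 3$, I would pair the last two generators $u_{m-1},u_m$ and set $I'=(u_1,\dots,u_{m-2})$. Writing $I = I' \oplus (I\setminus I')$ as multigraded $K$-vector spaces, the inductive hypothesis gives a Stanley decomposition of $I'$ with $\sdepth \ge n-\lfloor (m-2)/2\rfloor = n-\lfloor m/2\rfloor+1$. It remains to produce a Stanley decomposition of $I\setminus I'$---the monomials divisible by $u_{m-1}$ or $u_m$ but not lying in $I'$---with $\sdepth \ge n-\lfloor m/2\rfloor$; combining the two pieces yields the theorem. The trivial bound $\sdepth(I)\ge 1$ accounts for the $\max$ on the right when $\lfloor m/2\rfloor\ge n$.

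The main obstacle is the treatment of $I\setminus I'$: because $I'$ imposes forbidden multidegrees, this set is not an ideal but a difference of monomial vector spaces, so the clean two-generator argument does not apply verbatim. One must carefully choose, for each representative monomial of a would-be Stanley piece, a large subset $Z$ of variables so that the induced free $K[Z]$-module stays both inside $(u_{m-1},u_m)$ and outside $I'$. Arranging these exclusions so that only $\lfloor m/2\rfloor$ variables are ever blocked---rather than a number proportional to $m$---is the combinatorial heart of the argument and is where I expect the real work to lie; intuitively, each pair $(u_{2i-1},u_{2i})$ should account for at most one blocked variable, but verifying this via the characteristic poset is delicate and is the step I would spend the most effort on.
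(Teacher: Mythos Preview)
The paper does not actually prove this theorem: it is quoted verbatim from Okazaki's paper \cite{okazaki} and used as a black box (to derive Corollary~1.5). So there is no ``paper's own proof'' to compare against here; any assessment has to be of your argument on its own merits.

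On those merits, your proposal has a genuine gap that you yourself flag. The induction splits $I$ as $I'\oplus(I\setminus I')$ with $I'=(u_1,\dots,u_{m-2})$, and the inductive hypothesis indeed controls $I'$. But the piece $I\setminus I'$ consists of all monomials divisible by $u_{m-1}$ or $u_m$ and lying \emph{outside} the ideal generated by the other $m-2$ monomials. To build Stanley spaces $vK[Z]$ inside this set you must simultaneously avoid divisibility by each of $u_1,\dots,u_{m-2}$; a priori each such avoidance may force you to drop a variable from $Z$, which would only give $|Z|\ge n-(m-2)-1$ rather than $n-\lfloor m/2\rfloor$. Your hope that ``each pair $(u_{2i-1},u_{2i})$ should account for at most one blocked variable'' is precisely the content of the theorem, and nothing in the inductive setup as written gives you leverage on it: the inductive hypothesis tells you about Stanley pieces \emph{inside} $I'$, not about how to stay \emph{outside} $I'$. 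In other words, the recursion does not close.

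A workable route (and what Okazaki does) is not to peel off two generators at a time but to set up the pairing globally from the start and produce the decomposition directly, so that the ``one blocked variable per pair'' bookkeeping is built into the construction rather than left to an unproved inductive step. As written, your outline stops exactly where the real difficulty begins.
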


As a direct consequence of Lemma $1.2$, Proposition $1.3$ and Theorem $1.4$, we get.

\begin{cor}
$\sdepth(I_n)\geq 1+\frac{n-1}{2}$ and $\sdepth(J_n)\geq \frac{n}{2}$. In particular, $I_n$ and $J_n$ satisfy
the Stanley conjecture.
\end{cor}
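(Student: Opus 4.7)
The plan is to read off the lower bounds for $\sdepth$ directly from Okazaki's Theorem 1.4, and then verify the Stanley conjecture by comparing these lower bounds with the depths, which follow from Lemma 1.2 and Proposition 1.3 together with the Auslander--Buchsbaum formula (or a single application of the Depth Lemma).

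First I would observe that $I_n$ is minimally generated by the $n-1$ monomials $x_1x_2,\ldots,x_{n-1}x_n$, while $J_n$ is minimally generated by the $n$ monomials $x_1x_2,\ldots,x_{n-1}x_n,x_nx_1$. Applying Theorem $1.4$ with $m=n-1$ gives
\[ \sdepth(I_n) \;\geq\; n - \left\lfloor \tfrac{n-1}{2} \right\rfloor \;\geq\; 1 + \tfrac{n-1}{2}, \]
and with $m=n$ gives
\[ \sdepth(J_n) \;\geq\; n - \left\lfloor \tfrac{n}{2} \right\rfloor \;=\; \left\lceil \tfrac{n}{2} \right\rceil \;\geq\; \tfrac{n}{2}, \]
which establishes both asserted inequalities.

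Next I would pass from $S/I_n$ and $S/J_n$ to the ideals themselves. From the short exact sequence $0\to I\to S\to S/I\to 0$, the Depth Lemma together with $\depth(S)=n$ forces $\depth(I)=\depth(S/I)+1$ whenever $\depth(S/I)<n$. Combined with Lemma $1.2$ and Proposition $1.3$, this yields
\[ \depth(I_n) = \left\lceil \tfrac{n}{3} \right\rceil + 1, \qquad \depth(J_n) = \left\lceil \tfrac{n-1}{3} \right\rceil + 1. \]

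It then remains to check the numerical inequalities $n - \lfloor (n-1)/2 \rfloor \geq \lceil n/3 \rceil + 1$ and $\lceil n/2 \rceil \geq \lceil (n-1)/3 \rceil + 1$ for $n\geq 3$, which reduce to a short case analysis modulo $6$. There is no real obstacle here: all the substantive work is already done in Lemma $1.2$, Proposition $1.3$ and Theorem $1.4$, so the corollary is essentially bookkeeping, and the only mildly delicate point is confirming that the elementary estimate $\lceil n/2 \rceil \geq \lceil (n-1)/3 \rceil + 1$ remains tight at the small values $n=3,4$.
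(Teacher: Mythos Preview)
Your proposal is correct and follows exactly the route the paper indicates: the corollary is stated as ``a direct consequence of Lemma~1.2, Proposition~1.3 and Theorem~1.4'' with no further proof, and you have faithfully unpacked that sentence, including the passage from $\depth(S/I)$ to $\depth(I)$ via the Depth Lemma, which the paper leaves implicit. There is nothing to add.
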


In \cite{alin}, Alin \c Stefan computed the Stanley depth for $S/I_n$.

\begin{lema}\cite[Lemma 4]{alin}
$\sdepth(S/I_n) = \left\lceil \frac{n}{3} \right\rceil$.
\end{lema}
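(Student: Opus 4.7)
My plan is to prove both inequalities separately, working in the Herzog--Vladoiu--Zheng framework: Stanley decompositions of $S/I_n$ correspond bijectively to interval partitions $\{[C_j,D_j]\}$ of the poset $\mathcal{P}_n$ of independent sets of $L_n$ ordered by inclusion, with $\sdepth(S/I_n)$ equal to the maximum over such partitions of $\min_j |D_j|$. In particular, any Stanley piece $m K[Z] \subseteq S/I_n$ must have $\supp(m) \cup Z$ an independent set of $L_n$.

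For the lower bound $\sdepth(S/I_n) \geq \lceil n/3 \rceil$, I would induct on $n$, with the cases $n \leq 3$ checked directly. For the inductive step, I would split the monomial basis of $S/I_n$ according to whether $x_{n-1}$ divides a monomial, obtaining the $\mathbb{Z}^n$-graded $K$-vector space decomposition
\[
S/I_n \;\cong\; (S_{n-2}/I_{n-2})[x_n] \;\oplus\; x_{n-1}\,(S_{n-3}/I_{n-3})[x_{n-1}].
\]
The first summand collects classes of monomials avoiding $x_{n-1}$, so only the $I_{n-2}$-relations survive among $x_1,\ldots,x_{n-2}$ and $x_n$ is free; the second consists of $x_{n-1}$-multiples, which must also avoid both $x_{n-2}$ and $x_n$ since $x_{n-2}x_{n-1}, x_{n-1}x_n \in I_n$. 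By the inductive hypothesis and the standard identity $\sdepth(M[y]) = \sdepth(M) + 1$ for a fresh indeterminate, the two summands admit Stanley decompositions of depth at least $\lceil (n-2)/3 \rceil + 1$ and $\lceil (n-3)/3 \rceil + 1 = \lceil n/3 \rceil$, respectively. Concatenating produces a Stanley decomposition of $S/I_n$ of depth $\geq \lceil n/3 \rceil$.

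For the upper bound $\sdepth(S/I_n) \leq \lceil n/3 \rceil$, I would exhibit an obstructing squarefree class. Let $A := \{i \in [n] : i \equiv 2 \pmod 3\}$, which is independent in $L_n$. A short case analysis on $n \bmod 3$ shows that either $A$ itself (when $n \not\equiv 1 \pmod 3$) or $A \cup \{n\}$ (when $n \equiv 1 \pmod 3$) is a \emph{maximal} independent set of cardinality exactly $\lceil n/3 \rceil$, since every vertex of $[n]$ outside this set lies adjacent to it in $L_n$. Consequently every independent set $B \supseteq A$ satisfies $|B| \leq \lceil n/3 \rceil$. Now in any Stanley decomposition $\bigoplus_i m_i K[Z_i]$ of $S/I_n$, the class of $x^A$ lies in some piece $m_{i_0}K[Z_{i_0}]$, and writing $x^A = m_{i_0}\cdot w$ with $w \in K[Z_{i_0}]$ a monomial forces $A \subseteq \supp(m_{i_0})\cup Z_{i_0}$. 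Since the latter is independent, its cardinality is at most $\lceil n/3 \rceil$, so $|Z_{i_0}| \leq \lceil n/3 \rceil$, and $\sdepth(S/I_n) \leq \lceil n/3 \rceil$.

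The step I expect to require the most thought is identifying the correct obstructing set for the upper bound: the choice $A = \{2,5,8,\ldots\}$ is natural because spacing vertices by $3$ minimizes room for further extensions, but the argument still hinges on a careful case check on $n \bmod 3$ to verify the maximality statement and to see that $|A|$ (possibly augmented by $\{n\}$) is exactly $\lceil n/3 \rceil$. By contrast, the lower-bound induction is essentially bookkeeping once one has identified the correct variable---$x_{n-1}$ rather than $x_n$---on which to branch, since branching on $x_n$ alone would only yield the weaker bound $\lceil (n-1)/3 \rceil$ when $n \equiv 1 \pmod 3$.
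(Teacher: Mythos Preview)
The paper does not supply its own proof of this lemma; it is quoted verbatim from \cite{alin}. Your argument is correct and, in fact, parallels exactly the template the paper later uses for the cyclic analogue $S/J_n$: a decomposition induced by a well-chosen variable for the lower bound, and an obstructing maximal independent set for the upper bound (compare Proposition~1.8 and Theorem~1.9). The one methodological difference worth noting is that for the lower bound you give a direct $\mathbb{Z}^n$-graded $K$-vector space splitting
\[
S/I_n \;\cong\; (S_{n-2}/I_{n-2})[x_n]\;\oplus\; x_{n-1}(S_{n-3}/I_{n-3})[x_{n-1}],
\]
and simply concatenate Stanley decompositions of the summands, whereas the paper's treatment of $S/J_n$ passes through the short exact sequence $0\to S/(J_n:x_n)\to S/J_n\to S/(J_n,x_n)\to 0$ and invokes Rauf's inequality (Lemma~1.7). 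Your route is marginally more elementary---it avoids the exact-sequence machinery---but the two are essentially the same idea, and your choice to branch on $x_{n-1}$ rather than $x_n$ is precisely what is needed to get the sharp bound in the case $n\equiv 1\pmod 3$. Your upper-bound obstruction via $A=\{2,5,8,\ldots\}$ (augmented by $\{n\}$ when $n\equiv 1\pmod 3$) is the path-graph counterpart of the sets $\sigma$ used in the proof of Theorem~1.9.
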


In \cite{asia}, Asia Rauf proved the analog of Lemma $1.1(a)$ for $\sdepth$:

\begin{lema}
Let $0 \rightarrow U \rightarrow M \rightarrow N \rightarrow 0$ be a short exact sequence of $\mathbb Z^n$-graded $S$-modules. Then:
\[ \sdepth(M) \geq \min\{\sdepth(U),\sdepth(N) \}. \]
\end{lema}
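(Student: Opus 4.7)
The plan is to splice together optimal Stanley decompositions of $U$ and $N$ into a Stanley decomposition of $M$, using homogeneous lifts from $N$ back up to $M$. Concretely, I would fix decompositions
\[ U = \bigoplus_{i=1}^r u_i K[Z_i], \qquad N = \bigoplus_{j=1}^s \bar v_j K[W_j] \]
realizing $\sdepth(U)$ and $\sdepth(N)$ respectively. Identifying $U$ with its image in $M$, each $u_i$ already sits in $M$. For each $\bar v_j$, which is $\mathbb Z^n$-homogeneous of some multidegree $\alpha_j$, I would pick a $\mathbb Z^n$-homogeneous preimage $v_j \in M_{\alpha_j}$; this exists because the quotient map $\pi : M \to N$ is graded and surjective.

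The candidate Stanley decomposition of $M$ would then be
\[ \mathcal D \;:\; M \;=\; \bigoplus_{i=1}^r u_i K[Z_i] \;\oplus\; \bigoplus_{j=1}^s v_j K[W_j], \]
and I would verify three things. \emph{Freeness} of each $v_j K[W_j]$ in $M$: if $v_j f = 0$ in $M$ then $\bar v_j f = \pi(v_j f) = 0$ in $N$, so $f = 0$ by freeness in the decomposition of $N$. \emph{Directness}: given a relation $\sum_i u_i g_i + \sum_j v_j f_j = 0$, applying $\pi$ kills the $u_i$-terms and yields $\sum_j \bar v_j f_j = 0$ in $N$, so all $f_j$ vanish; the surviving relation then lies in $U$ and forces all $g_i = 0$ by the decomposition of $U$. \emph{Spanning}: for a homogeneous $m \in M$, expand $\pi(m) = \sum_j \bar v_j f_j$ via the decomposition of $N$, note $m - \sum_j v_j f_j \in U$, and expand this remainder as $\sum_i u_i g_i$ via the decomposition of $U$.

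Putting it all together, $\sdepth(\mathcal D) = \min_{i,j}\{|Z_i|,|W_j|\} \ge \min\{\sdepth(U),\sdepth(N)\}$, which proves the inequality. The only step that truly requires care is the choice of homogeneous lifts and the ensuing freeness of each $v_j K[W_j]$ inside $M$; this is exactly where the $\mathbb Z^n$-graded hypothesis on the sequence is essential, since it guarantees that a lift can be taken of the same multidegree as $\bar v_j$ and that relations can be detected component by component under $\pi$. Everything else is a short diagram chase through the exact sequence.
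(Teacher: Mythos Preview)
Your argument is correct and is precisely the standard proof of this inequality. The paper itself does not supply a proof of this lemma; it is quoted as a result of Rauf, and her argument proceeds along the same lines as yours---lifting a Stanley decomposition of $N$ to $M$ via homogeneous preimages and combining it with a decomposition of $U$.
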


Using these lemmas, we are able to prove the following Proposition.

\begin{prop}
$\sdepth(S/J_n) \geq \left\lceil \frac{n-1}{3} \right\rceil$. In particular, $S/J_n$ satisfies the Stanley conjecture.
\end{prop}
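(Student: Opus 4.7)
The plan is to mirror the proof of Proposition $1.3$, replacing Lemma $1.1$(a) with its Stanley-depth analogue (Lemma $1.7$). Concretely, I would apply Lemma $1.7$ to the same short exact sequence used before,
\[ 0 \longrightarrow S/(J_n:x_n) \stackrel{\cdot x_n}{\longrightarrow} S/J_n \longrightarrow S/(J_n,x_n) \longrightarrow 0, \]
so that $\sdepth(S/J_n) \geq \min\{\sdepth(S/(J_n:x_n)),\,\sdepth(S/(J_n,x_n))\}$.

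Next I would reuse the two isomorphisms already established in the proof of Proposition $1.3$: namely $S/(J_n,x_n) \cong S_{n-1}/I_{n-1}$ and $S/(J_n:x_n) \cong (S_{n-3}/I_{n-3})[x_n]$. For the first, Lemma $1.6$ gives directly $\sdepth(S_{n-1}/I_{n-1}) = \lceil (n-1)/3 \rceil$. For the second, I need the elementary fact that adjoining a free variable raises Stanley depth by one, i.e.\ $\sdepth_{S'}(M[x]) \geq \sdepth_S(M)+1$: given any Stanley decomposition $M = \bigoplus_i m_i K[Z_i]$, the decomposition $M[x] = \bigoplus_i m_i K[Z_i \cup \{x\}]$ witnesses the bound. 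Combined with Lemma $1.6$, this yields
\[ \sdepth(S/(J_n:x_n)) \geq \left\lceil \frac{n-3}{3} \right\rceil + 1 = \left\lceil \frac{n}{3} \right\rceil. \]

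Since $\lceil n/3 \rceil \geq \lceil (n-1)/3 \rceil$ for every $n$, Lemma $1.7$ immediately gives $\sdepth(S/J_n) \geq \lceil (n-1)/3 \rceil$, which by Proposition $1.3$ matches $\depth(S/J_n)$ and so confirms the Stanley conjecture for $S/J_n$.

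I do not expect a real obstacle here: unlike the depth argument, Lemma $1.7$ is one-sided and forgiving, so no case split on $n \bmod 3$ and no analysis of $(J_n:x_n)/J_n$ is necessary. The only point that needs to be cited or briefly justified is the polynomial-extension inequality $\sdepth(M[x]) \geq \sdepth(M)+1$, which is standard and follows directly from the construction of Stanley decompositions.
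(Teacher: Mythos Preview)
Your proposal is correct and follows essentially the same route as the paper: the same short exact sequence, the same two identifications of the outer terms, Lemma~1.6 for $\sdepth(S/I_m)$, and Lemma~1.7 to conclude. The only cosmetic difference is that the paper cites \cite[Lemma~3.6]{hvz} for the polynomial-extension fact $\sdepth(M[x])=\sdepth(M)+1$ (even getting equality), whereas you sketch the easy $\geq$ direction directly; either suffices here.
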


\begin{proof}
As in the proof of Proposition $1.3$, we consider the short exact sequence
\[ 0 \longrightarrow S/(J_n:x_n) \stackrel{\cdot x_n}{\longrightarrow} S/J_n \longrightarrow S/(J_n,x_n) \longrightarrow 0. \]
Since $S/(J_n:x_n)\cong (S_{n-2}/I_{n-2})[x_n]$ and $S/(J_n,x_n)\cong S_{n-1}/I_{n-1}$, by Lemma $1.6$ and \cite[Lemma 3.6]{hvz}, we get 
$\sdepth(S/(J_n:x_n)) = \left\lceil \frac{n-3}{3} \right\rceil + 1 = \left\lceil \frac{n}{3} \right\rceil$ and $\sdepth(S/(J_n,x_n)) = \left\lceil \frac{n-1}{3} \right\rceil$. Using Lemma $1.7$, we get $\sdepth(S/J_n)\geq \left\lceil \frac{n-1}{3} \right\rceil$, as required.
\end{proof}

Let $\mathcal P\subset 2^{[n]}$ be a poset and $\mathbf P:\mathcal P=\bigcup_{i=1}^r [F_i,G_i]$ be a partition of $\mathbf P$. We denote $\sdepth(\mathbf P):=\min_{i\in [r]} |D_i|$. Also, we define the Stanley depth of $\mathcal P$, to be the number
\[\sdepth(\mathcal P) = \max\{\sdepth(\mathbf P):\; \mathbf P \; \emph{is\; a\; partition\; of} \; \mathcal P \}.\]

We recall the method of Herzog, Vladoiu and Zheng \cite{hvz} for computing the Stanley depth of $S/I$ and $I$, where $I$ is a squarefree monomial ideal. Let $G(I)=\{u_1,\ldots,u_s\}$ be the set of minimal monomial generators of $I$. We define the following two posets:
\[ \mathcal P_I:=\{\sigma \subset [n]:\; u_i|x_{\sigma}:=\prod_{j\in\sigma}x_j \;\emph{for\;some}\;i\;\}\;\emph{and}\; 
\mathcal P_{S/I}:=2^{[n]}\setminus \mathcal P_I. \]
Herzog Vladoiu and Zheng proved in \cite{hvz} that $\sdepth(I)=\sdepth(\mathcal P_{I})$ and $\sdepth(S/I)=\sdepth(\mathcal P_{S/I})$.
Now, for $d\in\mathbb N$ and $\sigma\in \mathcal P$, we denote
\[ \mathcal P_d = \{\tau\in\mathcal P\;:\; |\tau|=d \}\;,\;\mathcal P_{d,\sigma} = \{ \tau\in\mathcal P_d\;:\; \sigma\subset\tau \}. \]

With these notations, we are able to prove the following result.

\begin{teor}
(1) $\sdepth(S/J_n)= \left\lceil \frac{n-1}{3} \right\rceil$, for $n\equiv 0 \pmod{3}$ and $n\equiv 2 \pmod{3}$.

(2) $\sdepth(S/J_n) \leq \left\lceil \frac{n}{3} \right\rceil$, for $n\equiv 1 \pmod{3}$.
\end{teor}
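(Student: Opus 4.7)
The plan is to complement the lower bound $\sdepth(S/J_n)\ge\lceil (n-1)/3\rceil$ from Proposition 1.8 with a single uniform upper bound $\sdepth(S/J_n)\le\lceil n/3\rceil$; since $\lceil n/3\rceil=\lceil (n-1)/3\rceil$ whenever $n\not\equiv 1\pmod 3$, this simultaneously yields the equality in (1) and the inequality in (2). I work with the Herzog--Vladoiu--Zheng model just recalled: $\mathcal{P}_{S/J_n}$ is the family of independent sets of the cycle graph $C_n$, and $\sdepth(S/J_n)=\sdepth(\mathcal{P}_{S/J_n})$.

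Starting from an arbitrary partition $\mathbf{P}=\bigcup_i [F_i,G_i]$ of $\mathcal{P}_{S/J_n}$ with $\min_i|G_i|=d$, I restrict to the subposet $\mathcal{P}'=\{\sigma\in\mathcal{P}_{S/J_n}:n\in\sigma\}$. Each interval $[F_i,G_i]$ with $n\in G_i$ contributes the piece $[F_i\cup\{n\},G_i]$ (which equals $[F_i,G_i]$ when $n\in F_i$), while intervals with $n\notin G_i$ contribute nothing; the resulting pieces form a partition of $\mathcal{P}'$ whose tops $G_i$ are unchanged, hence still satisfy $|G_i|\ge d$. The assignment $\sigma\mapsto\sigma\setminus\{n\}$ is then a poset isomorphism from $\mathcal{P}'$ to $\mathcal{P}_{S_{n-3}/I_{n-3}}$ (after relabeling $\{2,\ldots,n-2\}$ as $\{1,\ldots,n-3\}$): the requirement that $\sigma$ be independent in $C_n$ and contain $n$ forces $1,n-1\notin\sigma$ and forces $\sigma\cap\{2,\ldots,n-2\}$ to be independent in the line graph $L_{n-3}$, which is exactly the defining condition of $\mathcal{P}_{S_{n-3}/I_{n-3}}$. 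Under this isomorphism the restricted partition becomes a partition of $\mathcal{P}_{S_{n-3}/I_{n-3}}$ every top of which has size $|G_i|-1\ge d-1$.

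Consequently $\sdepth(S_{n-3}/I_{n-3})=\sdepth(\mathcal{P}_{S_{n-3}/I_{n-3}})\ge d-1$, and Lemma 1.6 evaluates the left-hand side as $\lceil (n-3)/3\rceil$. Combining these, $d\le\lceil (n-3)/3\rceil+1=\lceil n/3\rceil$, which is the desired upper bound. Everything is elementary once the HVZ model is in place; the one step that genuinely requires care is the poset bijection $\mathcal{P}'\cong\mathcal{P}_{S_{n-3}/I_{n-3}}$, which is precisely the combinatorial counterpart of the algebraic identity $S/(J_n:x_n)\cong (S_{n-3}/I_{n-3})[x_n]$ already exploited in the proofs of Propositions 1.3 and 1.8.
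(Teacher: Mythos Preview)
Your argument is correct and gives a genuinely different proof of the upper bound from the one in the paper. The paper proceeds by exhibiting, in each residue class of $n$ modulo~$3$, an explicit independent set $\sigma$ of $C_n$ that is maximal (so $\mathcal P_{|\sigma|+1,\sigma}=\emptyset$), which immediately forces $\sdepth(\mathcal P_{S/J_n})\le |\sigma|$; the three cases $n=3k,\,3k+2,\,3k+1$ are handled separately with $\sigma=\{1,4,\dots\}$ chosen by hand. Your route instead restricts any optimal partition of $\mathcal P_{S/J_n}$ to the subposet of sets containing $n$, recognizes that subposet as a shift of $\mathcal P_{S_{n-3}/I_{n-3}}$, and then invokes Lemma~1.6 to bound $d-1$ by $\lceil (n-3)/3\rceil$. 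This is the combinatorial avatar of the isomorphism $S/(J_n:x_n)\cong (S_{n-3}/I_{n-3})[x_n]$ already used in Propositions~1.3 and~1.8, so your proof is pleasantly uniform: one argument covers all $n$, and the three residue classes only enter in the arithmetic observation that $\lceil n/3\rceil=\lceil (n-1)/3\rceil$ iff $n\not\equiv 1\pmod 3$. The trade-off is that the paper's proof is self-contained (it does not need to quote the value of $\sdepth(S_{n-3}/I_{n-3})$), while yours leans on Lemma~1.6; on the other hand, your restriction technique is a reusable tool and makes transparent why the cycle bound differs from the path bound by at most one.
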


\begin{proof}
Using Proposition $1.8$, it is enough to prove the "$\leq$" inequalities. Let $\mathcal P=\mathcal P_{S/J_n}$. Firstly, note that if $\sigma \in \mathcal P$ such that $P_{d,\sigma}=\emptyset$, then $\sdepth(\mathcal P)<d$. Indeed, let $\mathbf P:\mathcal P=\bigcup_{i=1}^r [F_i,G_i]$ be a partition of $\mathcal P$ with $\sdepth(\mathcal P)=\sdepth(\mathbf P)$. Since $\sigma\in\mathcal P$, it follows that  $\sigma \in [F_i,G_i]$ for some $i$. If $|G_i|\geq d$, then it follows that $\mathcal P_{\sigma,d}\neq\emptyset$, since there are subsets in the interval $[F_i,G_i]$ of cardinality $d$ which contain $\sigma$, a contradiction. Thus, $|G_i|<d$ and therefore $\sdepth(\mathcal P)<d$.

We have three cases to study.

1. If $n=3k\geq 3$ and $\sigma=\{1,4,\ldots,3k-2\}$, then $\mathcal P_{k+1,\sigma} = \emptyset$. Indeed, if $u=x_1x_4\cdots x_{3k-2}$, one can easily see that $u\cdot x_j\in J_n$ for all $j\in [n]\setminus\sigma$. Therefore, be previous remark, $\sdepth(S/J_n)=\sdepth(\mathcal P)\leq k = \left\lceil \frac{n-1}{3} \right\rceil$, as required.

2. If $n=3k+2\geq 5$ and $\sigma=\{1,4,\ldots,3k+1\}$, then $\mathcal P_{k+2,\sigma} = \emptyset$. As above, it follows that $\sdepth(S/J_n)\leq k+1 = \left\lceil \frac{n-1}{3} \right\rceil$.

3. If $n=3k+1\geq 7$ and $\sigma=\{1,4,\ldots,3k-2,3k\}$, then 
$\mathcal P_{k+2,\sigma} = \emptyset$ and therefore $\sdepth(\mathcal P)\leq k+1 = \left\lceil \frac{n}{3} \right\rceil$.
\end{proof}

\begin{prop}
$\sdepth(J_n/I_n) = \depth(J_n/I_n) = \left\lceil \frac{n+2}{3} \right\rceil$, for all $n\geq 3$.
\end{prop}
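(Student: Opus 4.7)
The plan is to exploit the fact that $J_n/I_n$ is a cyclic $S$-module, generated by the image of $x_1 x_n$, so that up to a harmless degree shift,
\[
J_n/I_n \;\cong\; S/(I_n : x_1 x_n).
\]
The entire proof then reduces to computing this colon ideal and recognising the resulting quotient as a familiar object.

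A direct calculation, dividing each generator $x_i x_{i+1}$ of $I_n$ by $\gcd(x_i x_{i+1}, x_1 x_n)$ and discarding the obvious redundancies (the images $x_2 x_3$ and $x_{n-2} x_{n-1}$, which are multiples of $x_2$ and $x_{n-1}$ respectively), yields
\[
(I_n : x_1 x_n) \;=\; (x_2,\; x_{n-1},\; x_3 x_4,\, x_4 x_5,\, \ldots,\, x_{n-3} x_{n-2})
\]
for $n \geq 6$ (the inner list is empty for $n = 5$, and collapses further for $n = 3,4$). Killing the two pure-power generators $x_2$ and $x_{n-1}$ eliminates those variables from $S$, so that
\[
S/(I_n : x_1 x_n) \;\cong\; (S_{n-4}/I_{n-4})[x_1, x_n],
\]
where $S_{n-4} = K[x_3,\ldots,x_{n-2}]$ and the remaining edge ideal is (a relabelled copy of) $I_{n-4}$.

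The proof now closes by invoking Lemma $1.2$ and Lemma $1.6$ applied to $S_{n-4}/I_{n-4}$, together with the standard fact that adjoining a polynomial variable raises both depth and Stanley depth by one (the latter being \cite[Lemma 3.6]{hvz}). Since $\depth(S_{n-4}/I_{n-4}) = \sdepth(S_{n-4}/I_{n-4}) = \left\lceil \frac{n-4}{3} \right\rceil$, adjoining the two free variables $x_1, x_n$ yields
\[
\depth(J_n/I_n) \;=\; \sdepth(J_n/I_n) \;=\; \left\lceil \frac{n-4}{3} \right\rceil + 2 \;=\; \left\lceil \frac{n+2}{3} \right\rceil,
\]
and the small cases $n = 3, 4$ (where the colon collapses to $(x_2)$ and $(x_2, x_3)$ respectively, so $J_n/I_n \cong K[x_1, x_n]$) are immediate. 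There is no deep obstacle in this argument; the only mildly delicate step is the colon computation, in which one must keep track of exactly which inherited generators are absorbed by the two new linear forms $x_2$ and $x_{n-1}$.
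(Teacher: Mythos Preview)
Your argument is correct and essentially identical to the paper's: the paper describes the monomials of $J_n\setminus I_n$ directly to obtain the isomorphism
\[
J_n/I_n \;\cong\; x_1x_n\left(\frac{K[x_3,\ldots,x_{n-2}]}{(x_3x_4,\ldots,x_{n-3}x_{n-2})}\right)[x_1,x_n],
\]
which is exactly your $S/(I_n:x_1x_n)\cong (S_{n-4}/I_{n-4})[x_1,x_n]$ up to the degree shift you already flagged; the same lemmas (Lemma~1.2, Lemma~1.6, \cite[Lemma~3.6]{hvz}) are then invoked in the same way. The only cosmetic difference is that you phrase the reduction via the colon ideal, whereas the paper writes out the monomial decomposition by hand.
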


\begin{proof}
One can easily check that $\frac{J_3}{I_3} \cong x_1x_3K[x_1,x_3]$. Thus $\sdepth(J_3/I_3) = \depth(J_3/I_3) = 2$, as required. Similarly, for $n=4$, we have $\frac{J_4}{I_4} \cong x_1x_4K[x_1,x_4]$ and for $n=5$, we have $\frac{J_5}{I_5} \cong x_1x_5K[x_1,x_3,x_5]$.

Now, assume $n\geq 6$, and let $u\in J_n$ a monomial such that $u\notin I_n$. It follows that $u=x_1x_nv$, with $v\in K[x_1,x_3,\ldots,x_{n-2},x_n]$. We can write $v=x_1^{\alpha}x_n^{\beta}w$, with $w\in K[x_3,\ldots,x_{n-2}]$. Since $u\notin I_n$, it follows that $w\notin (x_3x_4,\ldots,x_{n-3}x_{n-2})$. Therefore, we have the $S$-module isomorphism:
\[ \frac{J_n}{I_n} = x_1x_n \left( \frac{K[x_3,\ldots,x_{n-2}]}{(x_3x_4,\ldots,x_{n-3}x_{n-2})} \right) [x_1,x_n] \]
and therefore, by Lemma $1.2$, Lemma $1.6$ and \cite[Lemma 3.6]{hvz}, we get 
$\sdepth(J_n/I_n) = \depth(J_n/I_n) = \left\lceil \frac{n-4}{3} \right\rceil + 2 = \left\lceil \frac{n+2}{3} \right\rceil$.
\end{proof}

\begin{obs}
\emph{If $n=4$, one can easily see that $\sdepth(S/J_4)=1$. Also, for $n=7$, we can show that $\sdepth(S/J_7)=2$, see Example $2.5$. On the other hand, using the $SdepthLib.coc$ of $CoCoA$, see \cite{rin}, we get $\sdepth(S/J_{10})=4$ and $\sdepth(S/J_{13})=5$. This remark, yields the following conjecture.}
\end{obs}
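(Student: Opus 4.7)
The plan is to handle each of the four numerical claims separately. The lower bound $\sdepth(S/J_n)\ge\lceil(n-1)/3\rceil$ is supplied for every $n$ by Proposition 1.8, so the work is in the matching upper bounds. For $n\equiv 1\pmod 3$, Theorem 1.9(2) already furnishes $\sdepth(S/J_n)\le\lceil n/3\rceil$, which is tight for $n=10$ and $n=13$ (yielding $4$ and $5$ respectively) but exceeds the asserted value by one when $n=4$ or $n=7$. Accordingly, $n\in\{4,7\}$ requires a sharper combinatorial argument via the Herzog--Vladoiu--Zheng poset $\mathcal P=\mathcal P_{S/J_n}$, while $n\in\{10,13\}$ is left to the computer algebra computation.

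For $n=4$, I would list $\mathcal P$ explicitly: the independent sets of $C_4$ are $\emptyset$, the four singletons, and the two maximal sets $\{1,3\}$ and $\{2,4\}$, so $|\mathcal P|=7$. To rule out $\sdepth(\mathcal P)\ge 2$, assume a partition $\mathbf P:\mathcal P=\bigcup_i [F_i,G_i]$ with every $|G_i|\ge 2$. Each $G_i$ must be one of the two size-$2$ maxima, so there are exactly two intervals. The element $\emptyset$ lies in one of them, say $[\emptyset,\{1,3\}]$, which then automatically absorbs $\{1\},\{3\},\{1,3\}$. The remaining interval has top $\{2,4\}$ and $F\neq\emptyset$, hence covers at most $\{2\},\{2,4\}$ or at most $\{4\},\{2,4\}$---in either case a singleton is left uncovered, a contradiction. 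Thus $\sdepth(S/J_4)\le 1$, and Proposition 1.8 gives equality.

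For $n=7$, which is the main obstacle, I would exploit the following rigidity: the seven pairs $\{i,i+3\}$ (indices modulo $7$) each admit a \emph{unique} extension to a size-$3$ independent set of $C_7$---for instance $\{1,4\}$ extends only to $\{1,4,6\}$---and these seven extensions exhaust the maximal elements of $\mathcal P$. In any partition with $\sdepth\ge 3$ each $G_i$ must be one of these seven sets, so the assignment of tops is forced by the distance-$3$ pairs. I would then close the argument either by a counting computation (using $|\mathcal P|=29$ together with the allowed interval sizes $1,2,4,8$ to enumerate the feasible size-profiles) or by tracking which singletons are still orphaned after the seven tops are chosen, reaching a contradiction in every case. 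This is precisely the content of Example~2.5, to which the remark defers; combined with Proposition 1.8 it yields $\sdepth(S/J_7)=2$.

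Finally, for $n=10$ and $n=13$, I would invoke Rinaldo's \texttt{SdepthLib.coc} package in CoCoA, which implements the Herzog--Vladoiu--Zheng decision procedure for $\sdepth(S/I)$, and apply it to $J_{10}$ and $J_{13}$. The outputs $4$ and $5$ are then recorded as stated; no further theoretical input is required beyond trusting the correctness of that routine.
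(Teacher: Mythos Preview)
Your handling of $n=4$ and of $n\in\{10,13\}$ matches the paper exactly: a short direct inspection for the former, and deferral to Rinaldo's CoCoA routine for the latter, with Proposition~1.8 and Theorem~1.9(2) supplying the a~priori bracket $\lceil (n-1)/3\rceil \le \sdepth(S/J_n)\le \lceil n/3\rceil$.

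For $n=7$ your route diverges from the paper's. You argue structurally, via the bijection between the seven distance-$3$ pairs $\{i,i+3\}$ and the seven maximal independent sets of $C_7$, and then propose to finish either by an ad~hoc case chase on orphaned singletons or by an interval-size count on $|\mathcal P|=29$. The paper instead invokes the general counting obstruction of Theorem~2.4: from $\beta_0=1$, $\beta_1=7$, $\beta_2=14$, $\beta_3=7$ one computes the numbers $\alpha_t$ recursively and finds $\alpha_3=-1<0$, which immediately rules out $\sdepth(\mathcal P)\ge 3$. This is exactly what Example~2.5 does, so your sentence ``This is precisely the content of Example~2.5'' is not accurate---Example~2.5 contains the $\alpha_t$ calculation, not the distance-$3$ rigidity observation. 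Your second option (the $1,2,4,8$ interval-size count) is in fact equivalent to the paper's $\alpha_t$ argument once one notes there are exactly seven intervals and exactly one of them has bottom $\emptyset$; it would be cleaner to present it that way rather than as an alternative to a case analysis you do not carry out.
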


\begin{conj}
$\sdepth(S/J_n)=\left\lceil \frac{n}{3} \right\rceil$, for all $n\geq 10$ with $n\equiv 1 \pmod{3}$.
\end{conj}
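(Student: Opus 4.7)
The statement is a conjecture, and Theorem $1.9(2)$ already supplies the upper bound $\sdepth(S/J_n)\leq\lceil n/3\rceil$ for $n\equiv 1\pmod 3$, so the task is the matching lower bound $\sdepth(S/J_n)\geq k+1$ when $n=3k+1$ and $k\geq 3$. The plan is to work directly with the Herzog--Vladoiu--Zheng poset $\mathcal{P}:=\mathcal{P}_{S/J_n}$, whose elements are the independent sets of the cycle $C_n$ ordered by inclusion, and to exhibit an interval partition $\mathcal{P}=\bigsqcup_{i}[F_i,G_i]$ with $|G_i|\geq k+1$ throughout.

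A first step is to observe that no purely local strengthening of Proposition $1.8$ can succeed: in the short exact sequence used there, the quotient $S/(J_n,x_n)\cong S_{n-1}/I_{n-1}$ has Stanley depth exactly $k$ by Lemma $1.6$, so the extra unit must come from globally exploiting the cyclic edge $\{1,n\}$. This motivates the split $\mathcal{P}=\mathcal{P}'\sqcup\mathcal{P}''$, where $\mathcal{P}'$ consists of independent sets containing $n$ and $\mathcal{P}''$ of those not containing $n$. After deleting $n$, the poset $\mathcal{P}'$ is isomorphic to $\mathcal{P}_{S/I_{n-3}}$ on the variables $x_2,\ldots,x_{n-2}$, so by Lemma $1.6$ it admits a partition whose tops have size $\geq\lceil(n-3)/3\rceil=k$; reattaching $n$ promotes every such top to size $\geq k+1$. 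In contrast $\mathcal{P}''\cong\mathcal{P}_{S/I_{n-1}}$ has Stanley depth only $k$, one short of the target, so the combinatorial core of the argument is to repair $\mathcal{P}''$ by borrowing from $\mathcal{P}'$.

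Concretely, any interval $[F,G]\subset\mathcal{P}''$ with $1,n-1\notin G$ may be enlarged to $[F,G\cup\{n\}]\subset\mathcal{P}$, at the cost of deleting the subinterval $[F\cup\{n\},G\cup\{n\}]$ from the chosen partition of $\mathcal{P}'$; after all such enlargements one repartitions the leftover piece of $\mathcal{P}'$ and verifies $|G_i|\geq k+1$ globally. I expect the main obstacle to be the intervals of $\mathcal{P}''$ whose top $G$ contains $1$ or $n-1$, for which the naive enlargement is forbidden: the partitions of $\mathcal{P}'$ and $\mathcal{P}''$ must be tuned simultaneously so that every such stubborn interval can be pivoted around the vertex $1$ or $n-1$ into a surrogate interval of size $\geq k+1$. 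The hypothesis $k\geq 3$ should enter precisely here, since maximum independent sets of $C_n$ then have size $\lfloor n/2\rfloor\geq k+2$, providing the slack needed for the surgery; the failure of the conjecture for $n=4$ and $n=7$ recorded in Remark $1.11$ confirms that the argument must genuinely break without this slack. An alternative complementary route is to verify the base cases $n=10,13$ by the methods of \cite{rin} and then propagate by an induction $n\mapsto n+3$, provided one can identify a poset isomorphism (or short exact sequence) relating $\mathcal{P}_{S/J_{n+3}}$ to $\mathcal{P}_{S/J_n}$ together with three adjoined vertices contributing an extra unit to the Stanley depth.
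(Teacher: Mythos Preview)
The statement you are addressing is labeled \emph{Conjecture} in the paper and is not proved there; the paper offers only the computational evidence $\sdepth(S/J_{10})=4$ and $\sdepth(S/J_{13})=5$ from Remark~1.11 and leaves the general case open. There is therefore no ``paper's own proof'' to compare against.

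Your proposal is an outline of a strategy, not a proof, and the gap you yourself flag is genuine and unresolved. The decomposition $\mathcal P=\mathcal P'\sqcup\mathcal P''$ and the Stanley depths of the two pieces are correct, but the surgery step is where the argument stalls. First, when you enlarge an interval $[F,G]\subset\mathcal P''$ to $[F,G\cup\{n\}]$, the set $[F\cup\{n\},G\cup\{n\}]$ you wish to delete from $\mathcal P'$ need not be a union of intervals of the chosen partition of $\mathcal P'$; ``repartitioning the leftover piece'' is not automatic and you give no mechanism for it. Second, and more seriously, for intervals $[F,G]\subset\mathcal P''$ with $|G|=k$ and $1\in G$ or $n-1\in G$ you propose to ``pivot around the vertex $1$ or $n-1$ into a surrogate interval,'' but no such pivot is described, and this is exactly the place where the $n=7$ obstruction lives. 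The observation that $\lfloor n/2\rfloor\ge k+2$ for $k\ge 3$ is suggestive but does not by itself produce the needed intervals. The alternative inductive route $n\mapsto n+3$ is likewise only a hope: no short exact sequence or poset map relating $S/J_{n+3}$ to $S/J_n$ with a controlled gain of one in Stanley depth is exhibited.

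In short, you have correctly located where the difficulty lies, which matches the paper's own assessment that the problem ``should not be underestimated,'' but the proposal does not close the gap and the conjecture remains open.
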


Even if $J_n$ and $I_n$ are closely related, the difficulty of Conjecture $1.12$ should not be underestimate. See for instance \cite{par}, where the authors, using fine tools of combinatorics were hardly able to compute the Stanley depth of the maximal monomial ideal $(x_1,\ldots,x_n)$. In the second section we will give a possible approach to this problem, see Example $2.5$.

\section{Bounds for Sdepth of quotient of monomial ideals}

\begin{lema}
Let $n\geq 1$ and $0\leq k\leq n$ be two integers and let $\mathcal P=\{\sigma\in 2^{[n]}\;|\;|\sigma|\leq k \}$. Then, there exists a partition $\mathbf P: \mathcal P = \bigcup_{i=1}^r [C_i,D_i]$ with $|D_i|=k$.
\end{lema}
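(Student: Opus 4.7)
The plan is to prove the statement by induction on $n$. For the base case $n=1$, either $k=0$ (take the single interval $[\emptyset,\emptyset]$) or $k=1$ (take $[\emptyset,\{1\}]$), both of which trivially partition $\mathcal P$. I would then carry out the inductive step by isolating the role played by the element $n\in[n]$.

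For the inductive step, suppose the claim holds for $n-1$ and every $0\leq k'\leq n-1$. Two degenerate cases are dispatched immediately: if $k=n$, take the single interval $[\emptyset,[n]]$; if $k=0$, take $[\emptyset,\emptyset]$. In the remaining range $1\leq k\leq n-1$, I split $\mathcal P$ into the two subposets
\[
\mathcal P'=\{\sigma\in\mathcal P\;|\;n\notin\sigma\},\qquad
\mathcal P''=\{\sigma\in\mathcal P\;|\;n\in\sigma\}.
\]
The first is exactly $\{\sigma\subseteq[n-1]\;|\;|\sigma|\leq k\}$, so by the inductive hypothesis applied to the pair $(n-1,k)$ it admits a partition into intervals $[C_i,D_i]$ with $D_i\subseteq[n-1]$ and $|D_i|=k$. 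The second is in bijection with $\{\tau\subseteq[n-1]\;|\;|\tau|\leq k-1\}$ via $\sigma\mapsto\sigma\setminus\{n\}$; the inductive hypothesis applied to $(n-1,k-1)$ yields a partition $\bigcup_j[C_j',D_j']$ with $|D_j'|=k-1$, and translating back gives the partition $\bigcup_j[C_j'\cup\{n\},D_j'\cup\{n\}]$ of $\mathcal P''$, each of whose top elements now has cardinality $k$.

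Concatenating these two partitions yields the desired partition of $\mathcal P$. I would double-check that every interval is indeed contained in $\mathcal P$: intervals in the first part lie in $\mathcal P'\subseteq\mathcal P$ by construction, and intervals in the second part lie in $\mathcal P''$ because requiring $n$ to belong to every $\sigma$ in $[C_j'\cup\{n\},D_j'\cup\{n\}]$ is automatic from $C_j'\cup\{n\}\subseteq\sigma$; the size condition $|\sigma|\leq k$ follows from $\sigma\subseteq D_j'\cup\{n\}$ and $|D_j'\cup\{n\}|=k$.

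There is no real obstacle here; the argument is a routine induction. The only delicate point is being careful with the boundary values $k=0$ and $k=n$, where one of the two halves $\mathcal P'$, $\mathcal P''$ becomes empty or the inductive hypothesis would be applied with an out-of-range parameter, so these cases are treated separately before invoking induction.
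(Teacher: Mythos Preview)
Your proof is correct and complete. The induction is clean, the boundary cases $k=0$ and $k=n$ are handled properly, and the two applications of the inductive hypothesis to $(n-1,k)$ and $(n-1,k-1)$ are both within range once you restrict to $1\le k\le n-1$.

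However, your approach is genuinely different from the paper's. The paper does not argue combinatorially at all: it observes that $\mathcal P$ is precisely the poset $\mathcal P_{S/I_{n,k+1}}$ associated to the squarefree Veronese ideal $I_{n,k+1}$ (the ideal generated by all squarefree monomials of degree $k+1$), and then invokes the known result $\sdepth(S/I_{n,k+1})=k$ from the literature to conclude that a partition with all tops of size $k$ exists. Your argument, by contrast, is a direct and self-contained induction on $n$ that produces the partition explicitly, with no appeal to Stanley-depth computations. What you gain is independence from an external theorem whose own proof may be nontrivial; what the paper's route gains is brevity, at the cost of relying on that citation. Your proof could in fact be read as an elementary reproof of the inequality $\sdepth(S/I_{n,k+1})\ge k$.
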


\begin{proof}
If $k=n$ or $k=0$ there is nothing to prove. Assume $1\leq k \leq n-1$. Note that $\mathcal P$ is the partition associated to $S/I_{n,k+1}$, where $I_{n,k+1}$ is the ideal generated by all the square free monomials of degree $k+1$.
According to \cite[Theorem 1.1]{mirver}, $\sdepth(S/I_{n,k+1})=k$. Thus, we can find a partition of $\mathcal P$, as required.
\end{proof}

\begin{prop}
Let $\mathcal P \subset 2^{[n]}$ be a poset such that $\sdepth(\mathcal P)\geq k$. Then there exists a partition
of $\mathcal P$, such that, for each interval $[C,D]$ of it, if $|C|<k$ then $|D|=k$. 

In particular, the above assertion holds, if $I\subset J$ are two monomial square-free ideals such that $\sdepth(J/I)=k$  and $\mathcal P = \mathcal P_{J/I}:=\mathcal P_{S/I}\cap \mathcal P_{J}$.
\end{prop}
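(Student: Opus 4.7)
The plan is to start with a partition of $\mathcal P$ that witnesses $\sdepth(\mathcal P) \geq k$, and then refine each offending interval by cutting it at cardinality $k$ and applying Lemma $2.1$ (and its dual) to the two halves separately.

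First I would fix a partition $\mathbf P : \mathcal P = \bigcup_{i=1}^r [C_i, D_i]$ with $|D_i| \geq k$ for every $i$. Intervals with $|C_i| \geq k$ or with $|D_i| = k$ already satisfy the required condition, so the only ones that need refining are those with $|C_i| < k < |D_i|$. For such an $i$, set $m_i = |D_i \setminus C_i|$; the map $A \mapsto A \setminus C_i$ identifies the Boolean interval $[C_i, D_i]$ with the cube $2^{[m_i]}$ and shifts cardinalities by $|C_i|$. I would then split this cube into the lower half $\mathcal L = \{T : |T| \leq k - |C_i|\}$ and the upper half $\mathcal U = \{T : |T| \geq k + 1 - |C_i|\}$.

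Lemma $2.1$ applied to $\mathcal L$ (with parameters $m_i$ and $k - |C_i|$) yields a partition of $\mathcal L$ into subintervals whose upper bounds all have size exactly $k - |C_i|$. For $\mathcal U$ I would first pass through the order-reversing involution $T \mapsto [m_i] \setminus T$, which identifies $\mathcal U$ with $\{T' \subseteq [m_i] : |T'| \leq m_i - k - 1 + |C_i|\}$; another application of Lemma $2.1$, followed by uncomplementing, yields a partition of $\mathcal U$ into subintervals whose lower bounds all have size $m_i - (m_i - k - 1 + |C_i|) = k + 1 - |C_i|$. Translating both halves back to $[C_i, D_i]$ by reattaching $C_i$, every resulting subinterval either has upper bound of size exactly $k$ or lower bound of size exactly $k + 1$. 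Performing this refinement on every offending index $i$ and leaving the remaining intervals unchanged produces the desired partition of $\mathcal P$.

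The "in particular" clause follows because, by the Herzog--Vladoiu--Zheng correspondence \cite{hvz}, $\sdepth(\mathcal P_{J/I}) = \sdepth(J/I) = k$, and the first part of the proposition applies to $\mathcal P = \mathcal P_{J/I}$. The main point of care I anticipate is the bookkeeping in the duality step: one must check that Lemma $2.1$ applied to the complemented poset produces, after uncomplementing, intervals whose lower bounds land at exactly $k + 1 - |C_i|$, so that reinserting $C_i$ pushes their size to $k + 1 \geq k$ and the condition "$|C| < k \Rightarrow |D| = k$" becomes vacuous on those pieces.
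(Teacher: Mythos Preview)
Your proof is correct and follows the same strategy as the paper: start from a witnessing partition and, for each offending interval $[C,D]$ with $|C|<k<|D|$, identify it with a Boolean cube and refine the part of size $\leq k-|C|$ via Lemma~2.1. The one difference is your treatment of the upper half $\mathcal U$: the paper does not invoke any duality there (it implicitly partitions the remaining sets trivially, e.g.\ into singletons), because any partition of $\mathcal U$ into intervals works---every such interval already has bottom of size at least $k+1-|C_i|$, so after reattaching $C_i$ the condition ``$|C|<k\Rightarrow|D|=k$'' is vacuous on those pieces. Your complementation argument for $\mathcal U$ is correct but unnecessary, and dropping it eliminates the bookkeeping concern you flagged at the end.
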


\begin{proof}
According to Herzog, Vladoiu and Zheng \cite{hvz}, we have $\sdepth(J/I)=\sdepth(\mathcal P_{J/I})$. Since $\sdepth(\mathcal P)\geq k$, we can find a partition of $\mathcal P$, such that each interval $[C,D]$ in this partition has $|D|\geq k$. 

Let $[C,D]$ be an interval of the partition of $\mathcal P$. If $|C|\geq s$ or $|D|=s$ there is nothing to do. Assume $|C|<k$ and $|D|>k$. We denote $|C|=t$ and $|D|=s$. Without losing the generality, we may assume that $D=[s]$ and $C=[s]\setminus [s-t]$. Using the previous Lemma, we can find a partition of $[\emptyset,[s-t]]=\bigcup_{i=1}^r [\overline C_i,\overline D_i]$ with $|\overline D_i|=k-t$ whenever $|\overline C_i|<k-t$. Let $C_i=C\cup \overline C_i$ and $D_i=C\cup \overline D_i$. It follows that $[C,D]=\bigcup_{i=1}^r [C_i,D_i]$ is a partition with $|D_i|=k$, whenever $|C_i|<k$. If we apply this method for each interval in the partition of $\mathcal P$, finally, we will get a partition of $\mathcal P$, as required.
\end{proof}

\begin{cor}
Let $\mathcal P \subset 2^{[n]}$ be a poset such that $\sdepth(\mathcal P) \geq k$. Denote $\mathcal P_{\leq k} = \{\sigma\in\mathcal P\: |\sigma|\leq k\}$. Then $\sdepth(\mathcal P_{\leq k})=k$.
\end{cor}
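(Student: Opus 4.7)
The plan is to prove the two inequalities $\sdepth(\mathcal P_{\leq k}) \leq k$ and $\sdepth(\mathcal P_{\leq k}) \geq k$ separately. The first is essentially trivial, while the second is obtained by restricting to $\mathcal P_{\leq k}$ the refined partition of $\mathcal P$ produced by Proposition $2.9$.

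For the upper bound, any interval $[C,D]$ appearing in a partition of $\mathcal P_{\leq k}$ has its top satisfy $D \in \mathcal P_{\leq k}$, so $|D|\leq k$; hence every partition of $\mathcal P_{\leq k}$ has $\sdepth$ at most $k$ (provided $\mathcal P_{\leq k}$ is nonempty, the degenerate case being set aside).

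For the lower bound, I would first apply Proposition $2.9$ to $\mathcal P$ to obtain a partition $\mathbf P : \mathcal P = \bigcup_{i=1}^{r} [C_i,D_i]$ in which $|C_i| < k$ forces $|D_i| = k$. I then trim this partition to $\mathcal P_{\leq k}$ according to three cases. If $|C_i| < k$, then $|D_i|=k$, so the entire interval $[C_i,D_i]$ lies inside $\mathcal P_{\leq k}$ and is kept intact. If $|C_i| = k$, then $C_i$ is the only element of $[C_i,D_i]$ of cardinality $\leq k$, so I replace the interval by the singleton $[C_i,C_i]$. If $|C_i| > k$, the interval contributes nothing to $\mathcal P_{\leq k}$ and is discarded. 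In every surviving interval the top has cardinality exactly $k$, which yields $\sdepth(\mathcal P_{\leq k}) \geq k$.

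The only routine verification is that the trimmed intervals indeed form a partition of $\mathcal P_{\leq k}$: pairwise disjointness is inherited from $\mathbf P$, and any $\sigma \in \mathcal P_{\leq k}$ belongs to exactly one $[C_i,D_i]$ whose bottom satisfies $|C_i| \leq |\sigma| \leq k$, placing it in case (i) or (ii). No deeper obstacle is expected, since the substantive combinatorial work has already been carried out in Proposition $2.9$ (and, via it, in Lemma $2.8$).
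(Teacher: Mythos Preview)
Your argument is essentially identical to the paper's own proof: the paper also dismisses the upper bound as obvious, then invokes the refined partition (there labelled Proposition~2.2, not~2.9) and intersects each interval with $\mathcal P_{\leq k}$ via the same three-case analysis. The only discrepancy is your reference numbering---the result you call Proposition~2.9 is the paper's Proposition~2.2, and the supporting lemma is Lemma~2.1, not~2.8.
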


\begin{proof}
Obviously, $\sdepth(\mathcal P_{\leq k})\leq k$. According to Proposition $2.2$, we can find a partition $\mathbf P: \mathcal P=\bigcup_{i=1}^r [F_i,G_i]$ of $\mathcal P$ such that $|G_i|=k$, whenever $|F_i|<k$. Note that 
$$[F_i,G_i]\cap \mathcal P_{\leq k} = \begin{cases}
  [F_i,G_i],\;|F_i|<k, \\
  [F_i,F_i],\;|F_i|=k, \\
  \emptyset,\;|F_i|>k
 \end{cases}$$
Therefore, $\mathcal P_{\leq k}=\bigcup_{i=1}^r [F_i,G_i]\cap \mathcal P_{\leq k}$ is a partition of $\mathcal P_{\leq k}$ with its Stanley depth $\geq k$.
\end{proof}

Let $\mathcal P \subset 2^{[n]}$ be a poset such that $\sdepth(\mathcal P) \geq k$. We denote $\beta_t=|\{\sigma\in\mathcal P:\;|\sigma|=t \}|$, for all $0\leq t\leq k$. 

We consider the poset $\mathcal P_{\leq k}:=\{\sigma\in\mathcal P\;:\;|\sigma|\leq k\}$. By Corollary $2.3$, we can find a partition $\mathbf P:\;\mathcal P_{\leq k}=\bigcup_{i=1}^r [F_i,G_i]$ with $|G_i|=k$ for all $i$. We may assume that $|F_{i}|\leq |F_{i+1}|$ for all $i\leq r-1$. For all $0\leq j\leq k$, we denote $\alpha_j=|\{i\;:\;|F_i|=j\}|$. Let $[F,G]$ be an arbitrary interval in the partition $\mathbf P$ such that $|F|=j$ for some $j\leq k$. Note that in the interval $[F,G]$ we have exactly $\binom{k-j}{t-j}$ sets of cardinality $t$. Therefore, we get $\beta_t = \sum_{j=0}^t \binom{k-j}{t-j} \alpha_j$, for all $0\leq t\leq k$. Moreover, $\alpha_0=\beta_0$, $\alpha_1=\beta_1-k\beta_0$, $\alpha_2=\beta_2-\binom{k}{2}\alpha_0-(k-1)\alpha_1$ and so on. Thus, we proved the following Theorem.

\begin{teor}
If $\sdepth(\mathcal P) \geq k$, then $\alpha_t\geq 0$ for all $0\leq t\leq k$, where $\alpha_0=\beta_0$ and $\alpha_t = \beta_t - \sum_{j=0}^{t-1} \binom{k-j}{t-j} \alpha_j$.
\end{teor}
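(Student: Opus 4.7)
The plan is to simply formalize the counting argument sketched in the paragraph preceding the theorem. The key input is Corollary $2.3$, which guarantees that $\mathcal P_{\leq k}$ admits a partition $\mathbf P:\mathcal P_{\leq k}=\bigcup_{i=1}^r [F_i,G_i]$ in which \emph{every} upper endpoint has cardinality exactly $k$ (not merely $\geq k$). This uniformity is what makes the subsequent counting tractable.

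First I would fix such a partition and, for $0\leq j\leq k$, define $\alpha_j:=|\{i:|F_i|=j\}|$. Then I would compute, for each fixed $t$ with $0\leq t\leq k$, how many elements of $\mathcal P_{\leq k}$ of cardinality $t$ lie inside a single interval $[F_i,G_i]$ with $|F_i|=j$: since $|G_i|=k$, the subsets of $G_i$ containing $F_i$ and of cardinality $t$ correspond to $(t-j)$-subsets of the $(k-j)$-set $G_i\setminus F_i$, giving exactly $\binom{k-j}{t-j}$ such elements (and $0$ if $t<j$). Summing over all intervals and using that the partition covers $\mathcal P_{\leq k}$ disjointly yields
\[
\beta_t \;=\; \sum_{j=0}^{t}\binom{k-j}{t-j}\alpha_j, \qquad 0\leq t\leq k,
\]
because $\mathcal P_{\leq k}$ contains precisely the elements of $\mathcal P$ of cardinality at most $k$, and those of cardinality exactly $t\leq k$ are counted by $\beta_t$.

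Next I would invert this triangular linear system. The coefficient of $\alpha_t$ on the right-hand side of the $t$-th equation is $\binom{k-t}{0}=1$, so isolating $\alpha_t$ gives precisely the recursion
\[
\alpha_0=\beta_0, \qquad \alpha_t \;=\; \beta_t - \sum_{j=0}^{t-1}\binom{k-j}{t-j}\alpha_j,
\]
which matches the formula in the statement. Finally, the positivity $\alpha_t\geq 0$ is immediate because $\alpha_t$ was defined as the cardinality of a set of indices.

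I do not foresee a real obstacle: the argument is essentially a bookkeeping exercise once the partition from Corollary $2.3$ is in hand. The only subtle point worth flagging in the write-up is to justify that the existence of a partition with $|G_i|=k$ (uniformly), rather than merely $|G_i|\geq k$, is exactly what Corollary $2.3$ provides when applied to $\mathcal P_{\leq k}$; without this uniformity the identity $\beta_t=\sum_j\binom{k-j}{t-j}\alpha_j$ would fail and the inversion step would be meaningless.
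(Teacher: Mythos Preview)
Your proposal is correct and coincides with the paper's own argument: the paper's proof is precisely the paragraph preceding the theorem (ending with ``Thus, we proved the following Theorem''), and you have faithfully reproduced its steps---invoke Corollary~2.3 to obtain a partition of $\mathcal P_{\leq k}$ with every $|G_i|=k$, define $\alpha_j$ as the number of intervals with $|F_i|=j$, count the $t$-subsets in each interval, derive $\beta_t=\sum_{j=0}^t\binom{k-j}{t-j}\alpha_j$, and invert. Your remark that the uniformity $|G_i|=k$ (not merely $\geq k$) follows because $G_i\in\mathcal P_{\leq k}$ forces $|G_i|\leq k$ is the one point worth making explicit, and you have done so.
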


Note that the above theorem give an upper bound for $\sdepth(J/I)$, where $I\subset J$ are square free monomial ideals. Indeed, we can consider the poset $\mathcal P:=\mathcal P_{J/I}$.

\begin{exm}
\emph{We consider the poset $\mathcal P:=\mathcal P_{S/J_n}$, where $J_n=(x_1x_2,\ldots,x_{n-1}x_n,x_nx_1)\subset S$. We claim that $\beta_t = \binom{n-t+1}{t} - \binom{n-t-1}{t-2}$, for all $0\leq t\leq n$. }

\emph{Indeed, if $\sigma=\{i_1,\ldots,i_t\}\in \mathcal P$ is a set of cardinality $t$ such that $1\leq i_1<i_2<\cdots<i_t\leq n$, then $i_{j+1}\geq i_j+2$ and $\{i_1,i_k\}\neq \{1,n\}$. There are exactly $\binom{n-t+1}{t}$, $t$-tuples $1\leq i_1<i_2<\ldots<i_t\leq n$ with $i_{j+1}\geq i_j+2$ and exactly $\binom{n-t-1}{t-2}$, $t$-tuples $1 = i_1<i_2 < \cdots <i_t=n$ with $i_{j+1}\geq i_j+2$. (To be more clear, if we denote $l_j := i_j - j + 1$, we have $1\leq l_1\leq l_2 \leq \cdots \leq l_t \leq n-t+1$ with $l_{j+1}>l_j$, and there are exactly $\binom{n-t+1}{t}$, $t$-tuples like this. If we fix $l_1=1$ and $l_t=n-t+1$, we have $2\leq l_2\leq\cdots\leq l_{t-1}\leq n-t$ and there are exactly $\binom{n-t-1}{t-2}$, $t-2$-tuples like this).}

\emph{Now, for $n=7$, one can easily check that $\beta_0=1$, $\beta_1=7$, $\beta_2=14$ and $\beta_3=7$. For $k=3$, we have
$\alpha_0=1$, $\alpha_1=4$, $\alpha_2=2$ and $\alpha_3=-1$. This shows, in the light of Theorem $2.4$, that we cannot find a decomposition of the poset associated to $S/J_7$ with its Stanley depth equal to $3$. On the other hand, by Proposition $1.8$, we have $\sdepth(S/J_7)\geq 2$, and thus $\sdepth(S/J_{7})=2$.}

\emph{For $n=3k-2$, where $k\geq 4$, we expect that $\alpha_0,\ldots,\alpha_k$ are nonnegative, which is indeed the case for small values of $k$, using computer experimentation. However, this is useful only as an heuristic method to estimate the Stanley depth of $S/J_n$. In order to compute exactly this invariant, one has to produce a concrete partition of the associated poset.}
\end{exm}




In the second part of this section, we give a lower bound for the Stanley depth of a quotient of monomial ideals in terms of the minimal number of monomial generators. First, we recall several results.

\begin{prop}\cite[Proposition 1.2]{mir}
Let $I\subset S$ be a monomial ideal (minimally) generated by $m$ monomials. Then $\sdepth(S/I)\geq n-m$.
\end{prop}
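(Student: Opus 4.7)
The plan is to construct an explicit Stanley decomposition of $S/I$ in which every piece has rank at least $n-m$, by assigning each standard monomial a canonical profile and partitioning the module by this profile.

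Write the minimal generators of $I$ as $u_i = x^{\alpha_i}$ for $i=1,\ldots,m$, and for any monomial $x^\beta\notin I$ set
\[
\phi_\beta(i) := \min\{j\in[n]:\beta_j<\alpha_{i,j}\},\quad i\in[m].
\]
The minimum exists for each $i$ precisely because $u_i\nmid x^\beta$, so $\phi_\beta:[m]\to[n]$ is well defined. Its image $T(\phi_\beta)$ has cardinality at most $m$, so $Z(\phi_\beta):=[n]\setminus T(\phi_\beta)$ satisfies $|Z(\phi_\beta)|\geq n-m$. Since $\phi_\beta$ is determined by $\beta$, grouping standard monomials by their profile partitions $S/I$ as a $\mathbb Z^n$-graded $K$-vector space.

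For a fixed profile $\phi:[m]\to[n]$, I would verify that $\mathcal M_\phi:=\{x^\beta\notin I:\phi_\beta=\phi\}$ is closed under multiplication by $K[Z(\phi)]$: incrementing $\beta_l$ with $l\in Z(\phi)$ preserves $\phi_\beta$, since for each $i$ either $\phi(i)<l$ (the minimum witness lies at a strictly smaller index, so is unaffected) or $\phi(i)>l$ (so $\beta_l\geq\alpha_{i,l}$ already holds, whence $\beta_l+1>\alpha_{i,l}$, and $l$ stays outside the violation set). The condition $\phi_\beta=\phi$ further constrains each coordinate: for $j\in T(\phi)$, $\beta_j$ lies in the finite interval $[\max\{\alpha_{i,j}:\phi(i)>j\}\,,\,\min\{\alpha_{i,j}:\phi(i)=j\}-1]$; for $j\in Z(\phi)$, only the lower bound $\beta_j\geq\max\{\alpha_{i,j}:\phi(i)>j\}$ applies (with the convention $\max\emptyset:=0$). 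Hence $\mathcal M_\phi$ decomposes as a finite disjoint union of Stanley pieces $x^\gamma K[Z(\phi)]$, one for each admissible assignment of the $T(\phi)$-coordinates in the above box.

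Assembling these pieces across all profiles $\phi$ yields a Stanley decomposition of $S/I$ in which every piece has rank at least $n-m$, giving the desired bound. The main technical point is the combinatorial bookkeeping in the profile analysis; the inequality itself becomes transparent once one sees that each of the $m$ minimal generators of $I$ contributes at most one index to $T(\phi)$, leaving at least $n-m$ free variables in $Z(\phi)$. A cleaner alternative I would keep as a backup is induction on $m$: given a Stanley decomposition of $S/(u_1,\ldots,u_{m-1})$ with pieces of rank $\geq n-m+1$, one carves out the multiples of $u_m$ from each piece via the standard staircase decomposition, losing at most one free variable per piece.
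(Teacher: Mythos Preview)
The paper does not prove this statement at all; it is simply quoted from \cite{mir}. Your argument is correct and self-contained. The profile map $\phi_\beta$ is well defined precisely because $x^\beta\notin I$, and the key observation---that the conditions $\phi_\beta=\phi$ decouple into independent interval constraints on each coordinate $\beta_j$ (upper bounds from those $i$ with $\phi(i)=j$, lower bounds from those $i$ with $\phi(i)>j$, nothing from those with $\phi(i)<j$)---is exactly what makes each nonempty $\mathcal M_\phi$ a product box, hence a finite disjoint union of translates of $K[Z(\phi)]$.

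The proof in \cite{mir} follows the inductive route you sketch as a backup: one shows that adding a single generator drops $\sdepth$ by at most one, by refining each piece $vK[Z]$ of a given Stanley decomposition of $S/(u_1,\ldots,u_{m-1})$ after excising the multiples of $u_m$, losing at most one free variable per piece. Your profile construction is a genuinely different, one-shot alternative: it avoids recursion and produces the entire decomposition in closed form, at the price of somewhat heavier combinatorial bookkeeping. Either approach is adequate here; the inductive one is shorter to write down, while yours makes the structure of the resulting decomposition completely explicit.
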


\begin{prop}\cite[Remark 2.3]{mirci}
Let $I,J\subset S$ be two monomial ideals. Then \linebreak $\sdepth((I+J)/I) \geq \sdepth(J) + \sdepth(S/I) - n.$
\end{prop}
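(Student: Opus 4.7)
The plan is to produce a Stanley decomposition of $(I+J)/I$ by combining Stanley decompositions of $J$ and of $S/I$, and to exploit the canonical $\mathbb Z^n$-graded isomorphism $(I+J)/I \cong J/(I \cap J)$.

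I would first pick a Stanley decomposition $\mathcal D_1 : J = \bigoplus_{i=1}^r m_i K[Z_i]$ with $|Z_i| \geq \sdepth(J)$, and a Stanley decomposition $\mathcal D_2 : S/I = \bigoplus_{j=1}^s \bar{n}_j K[W_j]$ with $|W_j| \geq \sdepth(S/I)$. Lifting each $\bar n_j$ to its monomial representative $n_j \in S$, the decomposition $\mathcal D_2$ corresponds to a $\mathbb Z^n$-graded $K$-vector space splitting $S = I \oplus V$, where $V := \bigoplus_{j=1}^s n_j K[W_j]$ collects exactly the monomials of $S$ that do not lie in $I$. Since both summands are $\mathbb Z^n$-graded subspaces, intersecting with $J$ gives $J = (J \cap I) \oplus (J \cap V)$ as $\mathbb Z^n$-graded $K$-vector spaces, whence $(I+J)/I \cong J/(J \cap I) \cong J \cap V$.

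Combining the two decompositions one gets
\[ J \cap V \;=\; \bigoplus_{i,j} \bigl( m_i K[Z_i] \cap n_j K[W_j] \bigr). \]
The key combinatorial observation is that a nonempty intersection on the right equals $c_{ij} K[Z_i \cap W_j]$, where $c_{ij} := \lcm(m_i, n_j)$: any monomial in $m_i K[Z_i] \cap n_j K[W_j]$ must be divisible by $c_{ij}$, and the quotient by $c_{ij}$ must be supported in $Z_i \cap W_j$; the compatibility needed for nonemptiness is that $\supp(c_{ij}/m_i) \subseteq Z_i$ and $\supp(c_{ij}/n_j) \subseteq W_j$. Transporting these pieces through the isomorphism yields a Stanley decomposition of $(I+J)/I$ whose pieces are $\overline{c_{ij}} K[Z_i \cap W_j]$ for the compatible pairs $(i,j)$.

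By inclusion–exclusion $|Z_i \cap W_j| \geq |Z_i| + |W_j| - n \geq \sdepth(J) + \sdepth(S/I) - n$, so the resulting Stanley decomposition has $\sdepth \geq \sdepth(J) + \sdepth(S/I) - n$, as claimed. The main technical point is the intersection lemma $m_i K[Z_i] \cap n_j K[W_j] = c_{ij} K[Z_i \cap W_j]$ together with the verification that the displayed double sum really is a direct sum and that its image in $(I+J)/I$ consists of free $K[Z_i \cap W_j]$-submodules; both reduce to careful bookkeeping with the $\mathbb Z^n$-grading, so there is no serious obstacle beyond this.
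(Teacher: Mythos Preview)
The paper does not actually prove this proposition; it is quoted verbatim from \cite[Remark 2.3]{mirci} and used as a black box in the proof of Proposition~2.9. So there is no ``paper's own proof'' to compare against here.

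Your argument is correct and is in fact the standard way such inequalities are established: take optimal Stanley decompositions of $J$ and of $S/I$, intersect the pieces, and use the elementary lemma that a nonempty intersection $mK[Z]\cap nK[W]$ equals $\lcm(m,n)\,K[Z\cap W]$ together with $|Z\cap W|\ge |Z|+|W|-n$. Your identification of the compatibility condition for nonemptiness is right, and the freeness of each resulting piece $\overline{c_{ij}}\,K[Z_i\cap W_j]$ in $(I+J)/I$ follows because $c_{ij}K[Z_i\cap W_j]\subset n_jK[W_j]\subset V$, so no multiple of $c_{ij}$ by a monomial in $K[Z_i\cap W_j]$ can lie in $I$. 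Nothing is missing.
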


\begin{lema}
Let $I,L\subset S$ be two monomial ideals such that $L$ is minimally generated by some monomials $w_1,\ldots,w_s$ which are not in $I$. Then $\mathcal B = \{w_1+I,\ldots,w_s+I\}$ is a system of generators of $J/I$, where $J:=L+I$.
\end{lema}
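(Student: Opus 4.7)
The plan is a direct verification. The inclusion $\sum_{j=1}^s S\,(w_j + I) \subseteq J/I$ is automatic, since each $w_j \in L \subseteq L+I = J$ implies that $w_j + I \in J/I$. For the reverse inclusion, I would take an arbitrary $f \in J = L + I$ and decompose it as $f = \ell + g$ with $\ell \in L$ and $g \in I$. Since $\{w_1,\ldots,w_s\}$ is a monomial generating set of the ideal $L$, there exist coefficients $a_1,\ldots,a_s \in S$ with $\ell = \sum_{j=1}^s a_j w_j$. Reducing modulo $I$ then yields
\[
f + I \;=\; \ell + I \;=\; \sum_{j=1}^s a_j\,(w_j + I),
\]
so $\mathcal{B}$ generates $J/I$ as an $S$-module.

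The role of the hypothesis $w_j \notin I$ is to guarantee that each class $w_j + I$ is nonzero, so that $\mathcal{B}$ is an honest set of $s$ generators. Without this, some of the $w_j + I$ could collapse to the zero class and the count $|\mathcal{B}| = s$ — which is the useful quantitative information for subsequent applications — could fail. This is precisely the feature that allows the lemma to be combined with bounds such as Proposition $2.7$ in order to control $\sdepth(J/I)$ in terms of $n$ and $s$.

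There is no real technical obstacle here: the statement is essentially a one-line reduction once one uses the definition of ideal generation and passes to the quotient. The only delicate point is conceptual rather than computational, namely articulating why the non-redundancy hypothesis on the $w_j$'s is imposed and what it ensures about $\mathcal{B}$, so that the lemma is sharp enough to be useful rather than a mere tautology.
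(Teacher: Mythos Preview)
Your argument is correct and follows essentially the same direct verification as the paper: the paper phrases it monomial-by-monomial (any monomial $w\in J\setminus I$ must be divisible by some $w_j$, since otherwise a generator of $I$ divides it), while you use the decomposition $f=\ell+g$ with $\ell\in L$, $g\in I$ and expand $\ell$ in the $w_j$; both are the obvious one-line check. The paper additionally remarks that $\mathcal B$ is minimal, which goes slightly beyond the stated lemma, whereas your discussion of the hypothesis $w_j\notin I$ correctly isolates why the count $|\mathcal B|=s$ is meaningful for the application in Proposition~2.9.
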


\begin{proof}
Denoting $G(I)=\{v_1,\ldots,v_p\}$, it follows that $J = (v_1,\ldots,v_p,w_1,\ldots,w_r)$.
So, if $w\in J\setminus I$ is a monomial, then $w_j|w$ for some $j\in [r]$ and therefore $\mathcal B$ is a system of generators for $J/I$. On the other hand, since $w_1,\ldots,w_r$ minimally generated $L$, we get the minimality of $\mathcal B$.
\end{proof}

We consider $I\subset J \subset S$ two monomial ideals. Denote $G(I)=\{v_1,\ldots,v_p\}$ and $G(J)=\{u_1,\ldots,u_q\}$ the sets of minimal monomial generators of $I$ and $J$.

 If $u_1\in I$, then we may assume that $v_1|u_1$. On the other hand, $I\subset J$ and therefore, there exists an index $i$ such that $u_i|v_1$. We get $u_i|u_1$ and thus $u_i=u_1=v_1$. Using the same argument, we can assume that there exists an integer $r\geq 0$ such that $u_1=v_1,\ldots,u_r=v_r$ and $u_{r+1},\ldots,u_q\notin I$. By Lemma $2.8$, $\{u_{r+1}+I,\ldots,u_q+I\}$ is a set of generators of $J/I$. With these notations, we have the following result, which is similar to \cite[Theorem 2.4]{mircea}.

\begin{prop}
$\sdepth(J/I) \geq n - p - \left\lfloor  \frac{q-r}{2} \right\rfloor$.
\end{prop}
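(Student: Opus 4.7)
The plan is to realize $J/I$ as $(L+I)/I$ for a suitable auxiliary monomial ideal $L$, and then combine three results already available in the excerpt: Okazaki's bound (Theorem $1.4$), the lower bound for $\sdepth(S/I)$ in terms of the number of generators (Proposition $2.6$), and the quotient-sum bound (Proposition $2.7$).

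More precisely, set $L := (u_{r+1},\ldots,u_q) \subset S$. Since $u_1=v_1,\ldots,u_r=v_r$ all lie in $I$, we have $J = (u_1,\ldots,u_q) = I + L$, so $J/I = (L+I)/I$. By the choice of the ordering, $u_{r+1},\ldots,u_q \notin I$, and by Lemma $2.8$ the classes $\{u_{r+1}+I,\ldots,u_q+I\}$ form a minimal system of generators of $J/I$; in particular $L$ is minimally generated by $q-r$ monomials.

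Now I would apply Proposition $2.7$ to get
\[ \sdepth(J/I) = \sdepth((L+I)/I) \geq \sdepth(L) + \sdepth(S/I) - n. \]
Theorem $1.4$ (Okazaki), applied to the ideal $L$, gives $\sdepth(L) \geq n - \lfloor (q-r)/2 \rfloor$ (the max with $1$ is only needed in trivial cases and does not weaken the inequality in the range we care about; if $n - \lfloor (q-r)/2 \rfloor \leq 0$ the claimed inequality is automatic from $\sdepth(J/I)\geq 0$ combined with $\sdepth(S/I)\geq n-p\geq 0$, so there is nothing to discuss there). Proposition $2.6$ supplies $\sdepth(S/I) \geq n-p$. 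Plugging these into the displayed inequality yields
\[ \sdepth(J/I) \geq \bigl(n - \lfloor (q-r)/2 \rfloor\bigr) + (n-p) - n = n - p - \lfloor (q-r)/2 \rfloor, \]
which is exactly the claim.

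There is no real obstacle here: the only thing to check carefully is that Proposition $2.7$ is indeed applicable (i.e.\ that $L$ and $I$ are both genuine monomial ideals, which is immediate from construction) and that the bookkeeping of minimal generators of $L$ matches the count $q-r$, which follows from the reduction described just before the proposition and from Lemma $2.8$. The edge case $q=r$ (i.e.\ $J=I$) must be read correctly: then $L=(0)$, $J/I=0$ and $\lfloor(q-r)/2\rfloor=0$, so the stated bound becomes $\sdepth(J/I)\geq n-p$, which is vacuous in the conventional sense for the zero module; this degenerate case does not affect the argument in substance.
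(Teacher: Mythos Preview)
Your proof is correct and follows essentially the same route as the paper: define the auxiliary ideal $J'=(u_{r+1},\ldots,u_q)$ (your $L$), write $J/I=(I+J')/I$, apply Proposition~2.7, and then invoke Theorem~1.4 and Proposition~2.6. The only difference is that you spell out the generator count for $L$ and discuss the degenerate cases, which the paper omits.
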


\begin{proof}
Denote $J'=(u_{r+1},\ldots,u_q)$. By our assumptions, we have $J/I = (I+J')/I$. By Proposition $2.7$, it follows that
$\sdepth(J/I)\geq \sdepth(J')+\sdepth(S/I) - n$. By Theorem $1.4$ and Proposition $2.6$ we are done.
\end{proof}


\vspace{2mm} \noindent {\footnotesize
\begin{minipage}[b]{15cm}
Mircea Cimpoea\c s, Simion Stoilow Institute of Mathematics, Research unit 5, P.O.Box 1-764,\\
Bucharest 014700, Romania\\
E-mail: mircea.cimpoeas@imar.ro
\end{minipage}}

\end{document}